\newtheorem{theorem}{Theorem}[section]
\newtheorem{lemma}{Lemma}[section]
\numberwithin{equation}{section}
\theoremstyle{definition}
\begin{document}

\newcommand{\al}{\alpha}
\newcommand{\fy}{\varphi}
\newcommand{\la}{\lambda}
\newcommand{\ep}{\epsilon}
\newcommand{\si}{\sigma}
\newcommand{\Si}{\Sigma}
\newcommand{\as}{\quad\text{as}\ }
\newcommand{\with}{\quad\text{with}\ }
\renewcommand{\for}{\quad\text{for}\ }
\newcommand{\where}{\quad\text{where}\ }
\renewcommand{\for}{\quad\text{for}\ }
\renewcommand{\Im}{\operatorname{Im}}
\newcommand{\wt}{\widetilde}

\def\tribar{\vert\thickspace\!\!\vert\thickspace\!\!\vert}
\def\PD{P(\partial_t)}
\def\PDal{P(\partial_t)}
\def\ul{u_\la}
\def\dH#1{\dot H^{#1}(\Omega)}
\def\Dal{\partial_t^\al}
\def\Om{\Omega}
\def\zK{P^\K}
\def\T{{\mathcal{T}}}
\def\I{{\mathcal{I}}}
\def\K{K}
\def\E{\mathcal{E}}
\def\H{\mathcal{H}}
\def\om{\mathcal{\omega}}
\def\calM {\mathcal{M}}
\def\M {\mathcal{M}}
\def\calS {\mathcal{S}}
\def\S {\mathcal{S}}
\def\calH {\mathcal{H}}
\def\H {\mathcal{H}}
\def\calI {\mathcal{I}}
\def\LL {\mathcal L}
\def\C {\mathcal C}
\def\ga {\gamma }
\def\de {\delta}

\title[Nonnegativity Preservation in FEM for FDE]
{On Nonnegativity Preservation in Finite Element Methods for Subdiffusion Equations}
\author[B. Jin, R. Lazarov, V. Thom\'ee, Z. Zhou]{Bangti Jin \and Raytcho Lazarov \and Vidar Thom\'ee \and Zhi Zhou}

\begin{abstract}
We consider
three types of subdiffusion
models, namely single-term, multi-term  and distributed order
fractional diffusion equations,
for which the maximum-principle holds
and which, in particular, preserve nonnegativity.
Hence the solution is nonnegative for
nonnegative initial data.
Following earlier work on the heat equation, our purpose is to study whether
this property is inherited by
certain spatially semidiscrete and fully discrete piecewise
linear finite element methods,
including the standard Galerkin method,
the lumped mass method and the finite volume element method.
It is shown that,
as for the heat equation,
when the mass matrix is nondiagonal,
nonnegativity is not preserved for small time or time-step, but may
reappear after a positivity threshold. For the lumped mass method
nonnegativity is preserved if and only if the triangulation in the finite
element space is of Delaunay type. Numerical experiments illustrate and
complement the theoretical results.
\\
\textbf{Keywords}: subdiffusion, finite element method,
nonnegativity preservation, Caputo derivative
\end{abstract}

\maketitle

\section{Introduction}\label{sec:intro}

In this work we consider the numerical analysis of mathematical models arising
in subdiffusion. One simple example is the single-term subdiffusion,
for which the governing equation is given by
\begin{equation}\label{eqn:de}
  \partial_t^\alpha u-\Delta u= 0,\quad \text{in  } \Omega,\ \ t > 0,
  \with 0<\al<1,
\end{equation}
under the initial-boundary conditions
\begin{equation}\label{eqn:ibc}
u=0,\quad\text{on}\  \partial\Omega,\ \ t > 0,\with
    u(0)=v, \quad\text{in  }\Omega,
\end{equation}
where $\Omega$ is a bounded  domain in $\mathbb R^2$ which for simplicity we assume to be polygonal,
and where $v$ is a given function on $\Omega$. Here $\partial_t^\alpha u$ denotes the Caputo fractional
derivative of order $\alpha\in(0,1)$ with respect to $t$,
defined by the convolution (see \cite{KilbasSrivastavaTrujillo:2006})
\begin{equation}\label{eqn:Caputo}
  \Dal u (t) = \frac{1}{\Gamma(1-\alpha)}\int_0^t (t-s)^{-\alpha}  \frac{d}{ds}u(s)ds,
\end{equation}
where $\Gamma(x)=\int_0^\infty s^{x-1}e^{-s}ds$ is the Gamma function.
By continuity we may include the standard heat equation for $\alpha=1$.
The fractional derivative is often used to describe anomalous diffusion,
in which the mean square variance grows sublinearly with
time $t$, at a rate slower than that in a Gaussian process.
It occurs in many applied disciplines, e.g.,
electron transport in Xerox copiers, molecule transport
in membranes, and thermal diffusion on fractal domains.

By the maximum principle,
the solution $u(t)$ of the heat equation satisfies the  nonnegativity preservation
property
\begin{equation}\label{eqn:pos-contin}
v=u(0)\geq 0 \quad \mbox{ in }\Omega \quad \mbox{implies }\quad u(t)\geq 0
\ \ \mbox{ in } \Omega, \for t\geq0.
\end{equation}
It is natural to ask to what extent this property extends to
numerical approximations, and for piecewise linear
finite element methods over regular triangulations this question has
attracted some attention.
Fujii \cite{Fujii:1973} showed that for the lumped mass
(LM) method, with backward Euler time-stepping,
nonnegativity is preserved when the angles in
the triangulation are acute. Thom\'{e}e and Wahlbin
\cite{ThomeeWahlbin:2008}
showed that for the semidiscrete standard Galerkin (SG) method,
nonnegativity is not preserved for small $t>0$, but for the
LM method, it is preserved if and only if the triangulation is of Delaunay type.
Later, Schatz, Thom\'ee and Wahlbin
\cite{SchatzThomeeWahlbin:2010} analyzed
several fully discrete methods and showed that, in particular,
these properties carry over to the backward Euler method.
Recently, Thom\'{e}e \cite{Thomee:2015}
and Chatzipantelidis, Horv\'ath and Thom\'ee \cite{Chatz:2014}
extended these results to cover also the
finite volume element (FVE) method
and further
discussed the existence of a  threshold
$t_0>0$, such that nonnegativity is preserved for  $t\geq t_0$.

Our purpose in this paper is to discuss the
nonnegativity preservation  property for a class of
piecewise linear finite element approximations, including
the SG, LM and FVE methods described briefly in Section \ref{sec:semidis},
 for the more general initial--boundary value problem
\begin{align}\label{eqn:fde}
	\PDal u-\Delta u&= 0,\quad\text{in}\ \Omega, \ \ \for t>0,
\\
u&=0,\quad\text{on}\  \partial\Omega,  \for t > 0, \quad\text{and}\quad
u(0)=v,\quad\text{in  }\Omega,
\notag
\end{align}
where $\PDal u $ denotes a  fractional order differential operator of the form
\begin{equation}\label{eqn:PDal}
  \PDal u (t) = \int_0^1 \partial_t^\alpha u(t) d\nu(\alpha),
\end{equation}
with $\nu(\alpha)$  a positive measure on $[0,1]$.
These more general models, including
in addition to \eqref{eqn:de}
 multi-term and distributed order models, are reviewed briefly
in Section \ref{sec:prelim}. There
we also introduce the solution operator $E(t)$ of \eqref{eqn:fde}
and discuss its analytic properties, including asymptotic behavior.
In a series of interesting works, Luchko
\cite{Luchko:2009fcaa,Luchko2009,Luchko:2011jmaa}
established that the solution $u(t)$ of
problem \eqref{eqn:fde} also satisfies the property
 \eqref{eqn:pos-contin}.

Numerical methods for the model \eqref{eqn:fde}
and their analysis have received considerable interest, cf., e.g.,
\cite{JinLazarovLiuZhou:2015,JinLazarovSheenZhou:2015,JinLazarovZhou:2013,JinLazarovZhou:2014,LinXu:2007,MustaphaAbdallahFurati:2014}.
The weak formulation of  \eqref{eqn:fde} is  to
find $u(t) \in H^1_0(\Omega)$
for $t>0$ such that
\begin{equation*}
(\PDal u(t), \fy) + a(u(t), \fy) =0,
\quad \forall \fy \in H^1_0(\Omega),\ \ t\ge0, \with u(0)=v,
\end{equation*}
where
$a(u,\chi)=(\nabla u, \nabla \chi)$ and $(\cdot, \cdot)$
is the inner product in  $L^2(\Omega)$.
The spatially semidiscrete finite element approximation
of  \eqref{eqn:fde} is based on
a family of shape regular quasi-uniform
triangulations ${\{\T_h\}}_{0<h<1}$
of  $\Omega$ into triangles, with $h$ the maximal
length of the sides of the triangulation $\T_h$,
and the associated  finite element
space $X_h$ of continuous
piecewise linear functions over $\T_h $,
\begin{equation*}
  X_h =\left\{\chi\in H^1_0(\Om): \
	  \chi ~~\mbox{linear in}  ~~\K,
 \,\,\,\,\forall \K \in \T_h\right\}.
\end{equation*}
The spatially semidiscrete approximation
of  \eqref{eqn:fde} is then to find $u_h (t)\in X_h$ for $t\ge0$ such that
\begin{equation}\label{eqn:sgfem-i}
 {[ \PDal u_{h}(t),\chi]}+ a(u_h(t),\chi)= {0},
\quad \forall \chi\in X_h,\ t >0, \with u_h(0)=v_h,
\end{equation}
where $v_h \in X_h$ is an approximation of the
initial data $v$, and $[\cdot,\cdot]$ is a suitable
inner product on $X_h$.
With $\{P_j\}_{j=1}^N$  the
interior nodes of  $\T_h$,
and $\{\phi_j\}_{j=1}^N$  the corresponding nodal basis, we may write
\begin{equation}\label{eqn:nodal}
    u_h(t)=\sum_{j=1}^N u_j(t)\phi_j(x)
    \quad \text{and} \quad v_h=\sum_j^N v_j\phi_j(x).
\end{equation}
Setting $U(t):=(u_1(t),\ldots,u_N(t))^T$ and
$V=(v_1, \ldots, v_N)^T$, the semidiscrete problem \eqref{eqn:sgfem-i} reads
\begin{equation}\label{eqn:matrix}
	P(\partial_t)
 U(t) +  \H U(t)=0,  \for t>0,
\with U(0)=V,\where \H=\M^{-1}\S.
\end{equation}
Here $\calM =(m_{ij})$, with $m_{ij}=[\phi_j,\phi_i]$, and
$\calS=(s_{ij})$, with $s_{ij}=a(\phi_j,\phi_i)$, denote the mass and
stiffness matrices, respectively, and are both symmetric
and positive definite. The case of the semidiscrete SG method
was first developed and analyzed by Jin et al. \cite{JinLazarovZhou:2013}
for the single-term problem on a convex polygonal domain, and
then extended to multi-term and distributed-order cases in
\cite{JinLazarovLiuZhou:2015,JinLazarovSheenZhou:2015}, where
optimal error estimates with respect to the regularity of
initial data $v$ were established.

Following the analysis
in the case of the heat equation, we
discuss nonnegativity preservation for the spatially
semidiscrete methods in Section  \ref{sec:semidis}.
Introducing the solution matrix $\E(t)$ by
writing $U(t)=\E(t)V$,   nonnegativity
preservation may be expressed as
$\E(t)\ge0$, elementwise.
In Theorem \ref{thm:failSG} we show
that if the mass matrix $\M$
is nondiagonal, which holds for the SG and FVE methods,
then this cannot happen
for small $t>0$. However for the LM method this happens if
and only if $\T_h$ is Delaunay.
In Theorem \ref{theorem:SG}, we show that if $\calH^{-1} >0$,
then there exists $t_0>0$
such that $U(t) \ge0$ for all {$t\ge t_0$}, if $V\ge0$.

In Section \ref{sec:fully} we discuss positivity preservation for  fully
discrete time stepping methods with a time step $\tau$, for \eqref{eqn:matrix},
based on convolution quadrature generated
by the backward Euler method, or
\begin{equation*}
   Q_n(U) + \calH U^n=Q_n(1)V, \for  n \ge 1,\with U^0=V,
   \where U^n\approx u_h(t_n),\ t_n=n\tau.
\end{equation*}
Here $Q_n(U)=\sum_{j=0}^n\om_{n-j}U^j$
denotes the convolution quadrature approximation to
$P(\partial_t)U(t_n)$; see Section \ref{sec:fully}.
This gives the fully discrete solution in the form
\begin{equation*}
    U^n=\E_{n,\tau}\,V := (\omega_0\,\calI+  \calH)^{-1}
    \Big(\sum_{j=0}^{n-1}\omega_{j}\ V
    -\sum_{j=1}^{n-1} \omega_{n-j} U^{j}\Big), \for n\ge1,\with U^0=V.
\end{equation*}
This method  was developed and analyzed in
\cite{JinLazarovSheenZhou:2015,JinLazarovZhou:2014} for the SG approximation,
and the convergence of the fully discrete solution to the
corresponding spatially semidiscrete one, which is needed below,
is shown in the general case in Appendix A.
We show time discrete  analogues of the results in Section
\ref{sec:semidis}, and also,
in Theorem \ref{thm:threshold}, we
give an upper  bound of the positivity threshold
for special triangulations.

In the final Section 5 we present numerical examples for various domains
$\Om$, with
Delaunay and non-Delaunay triangulations $\T_h$,
to illustrate and complement
our theoretical results.

As a result of the maximum principle, in addition to \eqref{eqn:pos-contin},
it also follows that the solution operator $E(t)$ is a contraction in
the maximum-norm, i.e., $\|E(t)\|_\infty\le 1$. The question about analogues of
this for the numerical methods was also discussed in
\cite{SchatzThomeeWahlbin:2010,ThomeeWahlbin:2008}
for the heat equation,
and in Appendix B we show that the analogue of this holds in the present
case for semidiscrete and fully discrete LM approximations, provided that the
stiffness matrix $\S$ is diagonally dominant, which is not
equivalent to $\T_h$ being Delaunay.

\section{Preliminaries}\label{sec:prelim}
In this section we present preliminary material concerning
the subdiffusion model \eqref{eqn:fde} and its analytical properties.
In particular we introduce the
{solution operator $E(t)$ for $t\ge0$ and the associated
scalar kernel function $u_\la(t)$,
 and study}
its asymptotic behavior
for small and large $t$, which will be needed later.

{As indicated in Section \ref{sec:intro},}
in addition to the single-term model \eqref{eqn:de},
several more complex models for subdiffusion have been proposed,
with \eqref{eqn:Caputo} replaced by a weighted linear combination
$\PDal u$, defined in \eqref{eqn:PDal}.
We will specifically study two cases, the discrete case (denoted by
case I below), with
point masses $\{\al_j\}_{j=1}^m\subset(0,1),\ m\ge1$, where
$\alpha_m<\alpha_{m-1} <\ldots<\alpha_1$,
and the distributed
case (denoted by case II below), in which $\nu$ is completely continuous,
i.e., $d\nu(\alpha)= \mu(\alpha)d\alpha $, with $\mu$ smooth and nonnegative.
For the characteristic function of $\PDal$,
the complex function $P(z)=\int_0^1z^\alpha d\nu(\alpha)$, we then have
\begin{equation*}
  P(z) = \begin{cases}
	  \displaystyle{  \sum_{i=1}^m}
	  b_iz^{\alpha_i}&\quad \mbox{in case I},
	  \\
	  \displaystyle{\int_0^1}z^\alpha\mu(\alpha)d\alpha&
	  \quad \mbox{in case II}.
  \end{cases}.
\end{equation*}
In the discrete case, the constants $b_i$ are positive,
with $b_1=1$, which ensures the
well-posedness of the  problem \eqref{eqn:fde},  cf. \cite{LiLiuYamamoto:2015}.
It reduces to
the single-term subdiffusion model \eqref{eqn:de} when $m=1$.
In the distributed case,
the nonnegative weight function $\mu(\alpha)$ is assumed to be
H\"{o}lder continuous, with $\mu(0)\mu(1)>0$.
These conditions are sufficient for the
positivity of the solution and are different from the
ones used in the existing literature{, cf.}
\cite{Kochubei:2008,
LiLuchkoYamamoto:2014},
The general model was proposed in order to extend the
flexibility of the single-term model, in that the
underlying stochastic process may contain a host
of different Hurst exponents, cf. \cite{ChechkinGorenfloSokolov:2002}.

For the  convenience of the reader we prove
the positivity preservation property \eqref{eqn:pos-contin}
for problem \eqref{eqn:fde}, using
arguments in \cite{Luchko:2009fcaa,Luchko2009,Luchko:2011jmaa}.
We first show  an extremal principle
for the fractional differential operator $P(\partial_t)$.
\begin{lemma}\label{lem:Ext}
{Assume  $f\in C[0,T]\cap C^1(0,T]$,
and let $f$ attain its minimum  at  $t_0\in(0,T]$. Then
$\PDal f(t_0)\le0$.}
\end{lemma}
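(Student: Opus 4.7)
The plan is to reduce the statement to the pointwise inequality
\[
\partial_t^\alpha f(t_0)\le 0\qquad\text{for every } \alpha\in(0,1),
\]
and then integrate against the positive measure $\nu$ to obtain
$P(\partial_t)f(t_0)=\int_0^1\partial_t^\alpha f(t_0)\,d\nu(\alpha)\le 0$.
The endpoint $\alpha=1$ is handled by the continuity convention
$\partial_t^1 f=f'$: the standard extremal argument gives $f'(t_0)=0$ when
$t_0$ is interior and $f'(t_0)\le 0$ when $t_0=T$.

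For fixed $\alpha\in(0,1)$, I would set $g(t):=f(t)-f(t_0)$, so that
$g\ge 0$ on $[0,T]$, $g(t_0)=0$, and $g'=f'$ on $(0,T]$. Since the Caputo
derivative annihilates constants, $\partial_t^\alpha f(t_0)=\partial_t^\alpha g(t_0)$,
and \eqref{eqn:Caputo} reads
\[
\Gamma(1-\alpha)\,\partial_t^\alpha f(t_0)
 = \int_0^{t_0}(t_0-s)^{-\alpha} g'(s)\,ds.
\]
Integrating by parts on $[\epsilon,t_0-\delta]$ with $u=(t_0-s)^{-\alpha}$ and
$dv=g'(s)\,ds$, and then sending $\epsilon,\delta\downarrow 0$, yields
\[
\int_0^{t_0}(t_0-s)^{-\alpha} g'(s)\,ds
 = -\,t_0^{-\alpha}\,g(0)\;-\;\alpha\!\int_0^{t_0}(t_0-s)^{-\alpha-1}g(s)\,ds,
\]
provided the boundary contribution at $s=t_0$ vanishes. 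Both remaining
terms are manifestly nonpositive: the first since $g(0)\ge 0$, the
second since $g\ge 0$ and $\alpha>0$. This gives $\partial_t^\alpha f(t_0)\le 0$
and, after integration against $d\nu(\alpha)$, completes the proof.

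The delicate step is the vanishing of the boundary contribution at
$s=t_0$, which has the indeterminate form $0\cdot\infty$. The $C^1(0,T]$
hypothesis, combined with $g(t_0)=0$, lets the mean value theorem deliver
$g(s)=O(t_0-s)$ as $s\uparrow t_0$, so $(t_0-s)^{-\alpha}g(s)=O((t_0-s)^{1-\alpha})\to 0$
since $\alpha<1$; this is exactly why the $C^1$ assumption is needed. A
secondary (but routine) point is that the improper integral
$\int_0^{t_0}(t_0-s)^{-\alpha}g'(s)\,ds$ is well defined and the truncated
integration by parts passes to the limit, which follows from the
continuity of $g'$ on $(0,T]$ together with the integrability that is
implicit in the very definition of $\partial_t^\alpha f(t_0)$.
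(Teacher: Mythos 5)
Your proposal is correct and follows essentially the same route as the paper: the same shift $g(s)=f(s)-f(t_0)$, the same integration by parts producing $-t_0^{-\alpha}g(0)-\alpha\int_0^{t_0}(t_0-s)^{-\alpha-1}g(s)\,ds$, and the same use of $g(s)=O(t_0-s)$ to kill the boundary term at $s=t_0$. Your extra remarks on the $\alpha=1$ endpoint and on the two-sided truncation of the integration by parts are careful refinements of details the paper leaves implicit, not a different argument.
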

\begin{proof}
{Setting $g(s)=f(s)-f(t_0)$, we have
by integration by parts, since $0\leq g(s)=O(t_0-s)$ as $s\to t_0$,
\begin{equation*}
  \int_0^{t_0}(t_0-s)^{-\alpha}g'(s)ds =
  -t_0^{-\alpha}g(0) -
  \alpha\int_0^{t_0}(t_0-s)^{-\alpha-1}g(s)ds\leq 0,
  \quad\text{for any}\ \al\in(0,1).
\end{equation*}
Since $f'(s)=g'(s)$ we thus have
$\Dal f(t_0) \le0$,
and hence
$\PDal f(t_0)=\int_0^1 \Dal f(t_0)d\nu(\alpha)\leq 0$.}
\end{proof}
\begin{theorem}
The solution operator $E(t)$
for  \eqref{eqn:fde}
is nonnegative, i.e., $E(t)v\geq0$ for $t\geq0$, if $v\geq0$.
\end{theorem}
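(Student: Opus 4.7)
My plan is to derive nonnegativity of $u(t)=E(t)v$ from the extremal principle of Lemma~\ref{lem:Ext} combined with a standard barrier-type perturbation, in the spirit of Luchko. Fix $T>0$ and assume first that $v\ge0$ is smooth enough (say $v\in C_0^\infty(\Om)$) that $u$ belongs to $C(\bar\Om\times[0,T])$, is $C^1$ in $t$ on $(0,T]$, and $C^2$ in $x$ on $\Om\times(0,T]$, with $\PDal u-\Delta u=0$ holding pointwise; the case of general $v\ge0$ will then follow by approximation together with continuity of $E(t)$ on the initial data.

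To introduce a strict inequality I would construct an auxiliary spatial barrier. Let $\phi\in C^2(\Om)\cap C(\bar\Om)$ solve $-\Delta\phi=1$ in $\Om$ with $\phi=0$ on $\partial\Om$; the classical maximum principle yields $\phi\ge0$. Set $\psi(x)=1+\phi(x)$, so that $\psi\ge1$ on $\bar\Om$, $-\Delta\psi=1$, and $\PDal\psi=0$ (the Caputo derivative of any $t$-independent function vanishes). For $\ep>0$ define
\[
  w_\ep(x,t)=u(x,t)+\ep\,\psi(x).
\]
Then $w_\ep(x,0)\ge\ep$ on $\Om$, $w_\ep=\ep$ on $\partial\Om\times[0,T]$, and by linearity
\[
  (\PDal-\Delta)\,w_\ep=(\PDal u-\Delta u)+\ep(\PDal\psi-\Delta\psi)=\ep>0
  \for (x,t)\in\Om\times(0,T].
\]

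Next I would argue by contradiction. Suppose $w_\ep$ attains a negative value somewhere in $\bar\Om\times[0,T]$. Since $w_\ep\ge\ep>0$ on the entire parabolic boundary, its global minimum must be attained at some interior point $(x_0,t_0)\in\Om\times(0,T]$ with $w_\ep(x_0,t_0)<0$. Spatial interior minimality gives $-\Delta w_\ep(x_0,t_0)\le0$, while Lemma~\ref{lem:Ext} applied to the map $t\mapsto w_\ep(x_0,t)$ yields $\PDal w_\ep(x_0,t_0)\le0$. Adding the two produces $(\PDal-\Delta)w_\ep(x_0,t_0)\le0$, contradicting the strict identity above. Hence $w_\ep\ge0$ on $\bar\Om\times[0,T]$, and passing to the limit $\ep\to0^+$ gives $u\ge0$; since $T$ was arbitrary, $E(t)v\ge0$ for all $t\ge0$.

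The main obstacle I anticipate is justifying the pointwise evaluation of $\PDal u$ and $\Delta u$ at a hypothetical minimum, which requires joint spatial-temporal regularity up to the parabolic boundary. For rough initial data, $E(t)v$ is smooth only for $t>0$ and typically has a weak singularity at $t=0^+$; the cleanest way around this is to first prove the result for smooth compactly supported nonnegative data (for which the necessary regularity is available from the Mittag-Leffler/contour-integral representation of $E(t)$ to be developed in this section), and then extend by density, using $L^2$-continuity of $E(t)$ in $v$ and the closedness of the nonnegative cone. Aside from this technical reduction the contradiction argument above is routine.
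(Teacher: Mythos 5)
Your argument is correct and follows the same strategy as the paper's proof: perturb $u$ to a strict supersolution, locate its minimum away from the parabolic boundary, and apply the extremal principle of Lemma~\ref{lem:Ext} together with $-\Delta w\le 0$ at an interior spatial minimum to reach a contradiction. The only substantive difference is the choice of barrier. The paper perturbs in \emph{time}, setting $w=u-\tfrac12\ep(1-t/T)$ with $\ep$ equal to minus the hypothetical negative minimum, and gets strictness from the identity $\Dal t=t^{1-\al}/\Gamma(2-\al)>0$, so that $(\PDal-\Delta)w<(\PDal-\Delta)u=0$; this needs no auxiliary PDE and no boundary regularity beyond what $u$ already has, but it ties the barrier to $T$. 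You perturb in \emph{space}, using $\psi=1+\phi$ with $-\Delta\phi=1$, $\phi|_{\partial\Om}=0$, which is $T$-independent and makes the $\ep\to0$ limit transparent, at the cost of invoking existence, interior $C^2$ regularity and continuity up to the boundary of $\phi$ on the polygonal domain $\Om$ (all true here, but an extra ingredient). Your explicit reduction to smooth compactly supported data followed by density and $L^2$-continuity of $E(t)$ addresses a regularity point that the paper passes over in silence, and is a welcome addition; both routes are equally valid proofs of the theorem.
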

\begin{proof}
Assume that the minimum of $u(t)=E(t)v$
is negative and achieved at $(x_0,t_0)\in \Omega\times(0,T]$.
With $\epsilon=-u(x_0,t_0)>0$ we set
{$w(x,t)=u(x,t)-\tfrac12\epsilon(1-{t}/{T})\geq u(x,t)-\tfrac12\epsilon$.}
Then, for  $x\in\partial\Omega$ or $t=0$,
\begin{equation*}
  w(x_0,t_0) \le u(x_0, t_0)=-\ep\le -\ep + u(x,t)
  \le -\ep + w(x,t) +\tfrac12\ep = w(x,t) -\tfrac12\ep.
\end{equation*}
Hence $w(x,t)$ cannot attain its minimal value at $t=0$ of for
$x\in\partial\Omega$.
Now let the minimum of $w$ be taken at $(x_1,t_1)\in \Omega\times(0,T]$.
Then by Lemma \ref{lem:Ext},
$\PDal w(x_1,t_1)\leq 0$, and clearly $-\Delta w(x_1,t_1)\leq 0$.
Noting the identity $\Dal t = t^{1-\alpha}/\Gamma(2-\alpha)$, we  find
\begin{equation*}
  (\PDal -\Delta) u(x_1,t_1) = (\PDal -\Delta)w(x_1,t_1) -
  {\frac{\epsilon}{2T}\int_0^1 \frac{t_1^{1-\al} }{\Gamma(2-\al)}
  d\nu(\alpha)<0,}
\end{equation*}
contradicting the fact that $u(t)$ is a solution of \eqref{eqn:fde}.
\end{proof}

\medskip

Next we derive {an expression for} the solution
operator $E(t)$ for problem \eqref{eqn:fde} by
means of Laplace transformation, denoting the
Laplace transform of $u $ by $\widehat{u}(z)=(\mathcal{L}u)(z)$.
Recall that \cite[Lemma 2.24]
{KilbasSrivastavaTrujillo:2006},
\begin{equation*}
  \mathcal{L}({\Dal u})(z) = z^\alpha \widehat{u}(z) - z^{\alpha-1}u(0),
\end{equation*}
and consequently
\begin{equation*}
  \begin{aligned}
    \mathcal{L}{\PDal u}(z) & 
    =\int_0^1 {\mathcal{L}{\Dal u}(z)}d\nu(\alpha)  = \int_0^1 z^\alpha d\nu(\alpha)\widehat{u}(z) - \int_0^1 z^{\alpha-1} d\nu(\alpha) u(0)\\
    & =P(z)\widehat{u}(z) - z^{-1}P(z)u(0).
  \end{aligned}
\end{equation*}
After Laplace transformation of \eqref{eqn:fde}, with $A=-\Delta$, the
negative Laplacian with a zero Dirichlet boundary condition, and the initial
condition $u(0)=v$, we therefore obtain
\begin{equation*}
  P(z)\widehat u (z) + A\widehat u (z) = z^{-1}P(z)v,
\end{equation*}
or
\begin{equation*}
  \widehat u (z) = J(z) v,\quad \mbox{where } J(z) = z^{-1}P(z)(P(z)I+A)^{-1},
\end{equation*}
where $I$ is the identity operator.
Hence we find for the solution operator, defined by $u(t)=E(t)v$,
\begin{equation}
\label{2.0}
  E(t)v = (\mathcal{L}^{-1}J)(t)v =
  \frac{1}{2\pi \mathrm{i}} \int_{\Gamma_\sigma} e^{zt}J(z)dz v,
\end{equation}
where $\Gamma_\sigma =\{z=\sigma+\mathrm{i}\eta: \sigma>0,
\ \eta\in\mathbb{R}\}$.
Letting $\{\lambda_j\}_{j=1}^\infty$ be the eigenvalues in increasing order
and $\{\fy_j\}_{j=1}^\infty$  the corresponding $L^2(\Omega)$ orthonormal
 eigenfunctions of the operator $A$, we may also write
\begin{equation}\label{eqn:eigen-rep}
  E(t)v = \sum_{j=1}^\infty u_{\lambda_j}(t)(v,\fy_j)\fy_j,
\end{equation}
where
\begin{equation} \label{22}
u_\la(t)=	
(\LL^{-1}J_\la)(t)=\frac{1}{2\pi\mathrm{i}}
\int_{\Gamma_\si}e^{zt}J_\la(z)\,dz,\quad
J_\la(z)
=z^{-1}P(z)(P(z)
+\la)^{-1}.
\end{equation}
The scalar kernel
function $u_\lambda=u_\lambda(t)$ solves the  fractional order
initial-value problem
\begin{equation}\label{scalder}
\PDal u_\la + \la u_\la = 0,\with \ul(0)=1.
\end{equation}
Since $P(z)$ is analytic in the complex plane, cut along the negative
real axis, we obtain, after deforming the contour $\Gamma_\sigma$ in \eqref{22},
and noting that, for small $\de>0$,
\begin{equation*}
	\int_{|z|=\de}|J_\la(z)|\,|dz|  \le
	C{\delta^{-1}}\int_{|z|=\de}P(\de)\,|dz|
	{=2\pi\, C\,P(\de)}\to 0,\as \de\to0,
\end{equation*}
and hence we find that
\begin{equation}\label{23}
u_\la(t)=	
\frac1\pi\int_0^\infty e^{-st} K_\la(s)\,ds,\quad
K_\la(s)=\Im J_\la(-s)=
\frac{\la\Im P(-s)}{s|P(-s)+\la|^2}>0,
\end{equation}
where we have used
\[
\Im P(-s)=\Im\int_0^1 e^{\mathrm{i} \al\pi} s^\al\,d\nu(\al)
=\int_0^1\sin \al\pi\,s^\al\, d\nu(\al)>0.
\]
{Since
$K_\la(s)$ in \eqref{23}
is positive,
$u_\la(t)$
is completely monotone, and, in particular, monotone for $t\ge0$.}


\bigskip

We now study the asymptotic behavior
of the function $u_\lambda(t)$ defined in \eqref{22}.
Our main tool will be a special case of the
Karamata-Feller Tauberian theorem
(see \cite[Theorems 2--4, pp. 445--466]{Feller:2008}),
which we state as the following lemma.
Recall that a positive function $L(t)$ defined on $(0,\infty)$
varies slowly at infinity if for every fixed $x$,
$L(tx)/L(t)\to 1$ as $t\to\infty$.
\begin{lemma}
\label{thm:karamata-feller}
Let $u(t)$ be a monotone function with its Laplace transform
$\omega(z)=\LL u(z)$ defined for $z=s>0$,
$L(t)$ be slowly varying at infinity, and $\rho>0$. Then
\begin{equation*}
  \omega(s) \sim \frac{1}{s^\rho}L\left(\frac{1}{s}\right)
  \mbox{ as } s\to 0\quad \mbox{ implies }
  \quad u(t)\sim \frac{1}{\Gamma(\rho)}t^{\rho-1}L(t)
  \ \mbox{ as } t\to\infty.
\end{equation*}
The statement is also valid when the roles of the origin and infinity are interchanged.
\end{lemma}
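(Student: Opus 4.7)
The plan is to prove this classical Tauberian result by a rescaling argument: convert the hypothesized asymptotic of $\omega(s)$ as $s \to 0$ into vague convergence of rescaled functions $v_t(x) := u(tx)/(t^{\rho-1}L(t))$, and then use monotonicity of $u$ to promote this to pointwise convergence.

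First I would reduce to the case where $u$ is nondecreasing with $u(0) = 0$; the nonincreasing case reduces by a sign flip, and any nonzero constant contributes only a lower-order term $c/s$ to $\omega(s)$ that is absorbed into the error (with separate bookkeeping when $\rho \leq 1$). Karamata's uniform convergence theorem for slowly varying functions then yields $L(t/s)/L(t) \to 1$ uniformly for $s$ in compact subsets of $(0,\infty)$, which allows free interchange of $L(t)$ and $L(t/s)$ in the limit arguments below.

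Second, by the change of variables $y = tx$,
\begin{equation*}
(\mathcal{L}v_t)(s) \;=\; \int_0^\infty e^{-sx} v_t(x)\,dx \;=\; \frac{\omega(s/t)}{t^\rho L(t)},
\end{equation*}
and the hypothesis applied at the argument $s/t \to 0$, together with slow variation, gives
\begin{equation*}
(\mathcal{L}v_t)(s) \;\sim\; \frac{(s/t)^{-\rho} L(t/s)}{t^\rho L(t)} \;\longrightarrow\; s^{-\rho} \quad (t \to \infty),
\end{equation*}
for each fixed $s > 0$. Since $s^{-\rho} = \mathcal{L}(x^{\rho-1}/\Gamma(\rho))(s)$, the continuity theorem for Laplace transforms (a Stieltjes analogue of L\'evy's theorem) combined with uniqueness of the Laplace transform implies that $v_t(x)\,dx$ converges vaguely to $x^{\rho-1}/\Gamma(\rho)\,dx$.

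Finally, I would use the monotonicity of each $v_t$ (inherited from $u$) to upgrade vague convergence to pointwise convergence. For any $x > 0$ and $\epsilon > 0$, monotonicity gives the sandwich
\begin{equation*}
v_t(x-\epsilon) \;\leq\; \frac{1}{2\epsilon}\int_{x-\epsilon}^{x+\epsilon} v_t(y)\,dy \;\leq\; v_t(x+\epsilon),
\end{equation*}
so the vague limit controls the running average; sending $t \to \infty$ and then $\epsilon \to 0$ yields $v_t(x) \to x^{\rho-1}/\Gamma(\rho)$, and specializing to $x = 1$ gives the conclusion. The main obstacle is precisely this last step: without the monotonicity hypothesis the Tauberian conclusion genuinely fails (standard counterexamples are constructed from oscillatory perturbations whose Laplace transforms vanish at $s = 0$ to high order), so the proof must carefully exploit monotonicity to convert the inherently averaged Laplace-side asymptotic into the required pointwise statement for $u(t)$.
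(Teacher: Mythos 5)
The paper does not actually prove this lemma: it is quoted from Feller's book \cite{Feller:2008} and used as a black box, so there is no in-paper argument to compare against. Your sketch is a correct outline of the classical proof --- indeed it is essentially the Karamata/Feller argument itself: the rescaling $v_t(x)=u(tx)/(t^{\rho-1}L(t))$ turns the hypothesis, via $(\LL v_t)(s)=\omega(s/t)/(t^\rho L(t))\to s^{-\rho}$ and slow variation, into vague convergence of $v_t(x)\,dx$ to $x^{\rho-1}/\Gamma(\rho)\,dx$ by the extended continuity theorem, and the monotone sandwich upgrades this to pointwise convergence at $x=1$; your remark that monotonicity is exactly where the Tauberian content enters is right. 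Two small repairs. First, the reduction ``the nonincreasing case reduces by a sign flip'' does not work as stated: negating a nonnegative nonincreasing $u$ produces a negative density and would force $L$ to be negative, contradicting the convention that slowly varying functions are positive. No reduction is needed --- for nonincreasing $u$ the sandwich simply reverses to $v_t(x+\epsilon)\le\frac{1}{2\epsilon}\int_{x-\epsilon}^{x+\epsilon}v_t(y)\,dy\le v_t(x-\epsilon)$ and the rest is unchanged; this is in fact the only case the paper uses, since the kernel $u_\la$ is completely monotone and positive. Second, the deferred ``bookkeeping'' for subtracting a constant is genuinely delicate when $\rho\le1$, since $c/s$ then dominates $s^{-\rho}L(1/s)$; this is another reason to avoid the reduction. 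Finally, the interchanged statement ($s\to\infty$, $t\to0$, $L$ slowly varying at the origin) follows from the identical computation and deserves at least a sentence, since the paper invokes that direction too (for the small-$t$ asymptotics in Lemma \ref{lem:cpl_mt}).
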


We can now  determine the asymptotic behavior for the kernel function $\ul(t)$.
\begin{lemma}\label{lem:cpl_mt}
Let $\mu\in\C^\ga([0,1])$ and  $\mu(0)\mu(1)>0,\
\lambda>0$. Then
\begin{equation}\label{eqn:asymp-t=0}
	u_\la(t) =1-\la\beta_0(t)(1+o(1))
\as t\to0,
  \where \beta_0(t)=
  \begin{cases}
	  t^{\al_1}/\Gamma(1+\al_1)
	  &\quad\text{\emph{in case I}},
  \\
  t/(\mu(1)\log(t^{-1}))
  &\quad \text{\emph{in case II}},
\end{cases}
\end{equation}
and
\begin{equation}\label{eqn:asymp-t=inf}
	u_\la(t) = \la^{-1}\beta_\infty(t)(1+o(1))
  \as t\to\infty,
  \ \text{where}\ \beta_\infty(t)=
  \begin{cases}
   b_m
     t^{-\al_m}/
     \Gamma(1-\al_m),
     &\ \mbox{ \emph{in case I}},
     \\
  \mu(0) / \log t,
     &\ \mbox{ \emph{in case II}}.
  \end{cases}
\end{equation}
\end{lemma}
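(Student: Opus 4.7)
The plan is to work throughout with the Laplace representation $\widehat u_\la(s)=J_\la(s)=s^{-1}P(s)(P(s)+\la)^{-1}$ from \eqref{22} and then apply the Karamata--Feller Tauberian theorem (Lemma \ref{thm:karamata-feller}). Since \eqref{23} already shows that $u_\la$ is completely monotone, both $u_\la$ and the nonnegative function $1-u_\la$ are monotone on $[0,\infty)$, which supplies the hypothesis needed to invoke Lemma \ref{thm:karamata-feller} directly. The argument then reduces to extracting the asymptotics of $P(s)$ at $s\to 0^+$ and $s\to\infty$, inserting them into either $J_\la(s)$ (for the large-$t$ statement) or $\la/(s(P(s)+\la))=\mathcal L(1-u_\la)(s)$ (for the small-$t$ statement), and reading off the exponent $\rho$ and the slowly varying factor.

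I would first derive those asymptotics for $P(s)$. In case I the behavior of $P(s)=\sum_{i=1}^m b_i s^{\al_i}$ is immediate: the smallest exponent dominates as $s\to 0^+$, giving $P(s)\sim b_m s^{\al_m}$, while the largest dominates as $s\to\infty$, giving $P(s)\sim s^{\al_1}$ since $b_1=1$. In case II the integral $P(s)=\int_0^1 s^\al\mu(\al)\,d\al$ is handled by a Laplace-type change of variable: for $s\to 0^+$ set $\al=u/\log(s^{-1})$ to localize near $\al=0$, and for $s\to\infty$ set $\al=1-u/\log s$ to localize near $\al=1$. The H\"older continuity of $\mu$ at the endpoints together with dominated convergence then yields $P(s)\sim\mu(0)/\log(s^{-1})$ as $s\to 0^+$ and $P(s)\sim \mu(1) s/\log s$ as $s\to\infty$.

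For $t\to\infty$ I would substitute the $s\to 0^+$ asymptotics into $J_\la(s)\sim P(s)/(\la s)$ (valid because $P(s)\to 0$) to obtain $J_\la(s)\sim s^{-(1-\al_m)}(b_m/\la)$ in case I and $J_\la(s)\sim s^{-1}\mu(0)/(\la\log(s^{-1}))$ in case II, and then apply Lemma \ref{thm:karamata-feller} with $\rho=1-\al_m$ and $L\equiv b_m/\la$ in the first case, or with $\rho=1$ and $L(x)=\mu(0)/(\la\log x)$ in the second; in the latter, $L$ is easily checked to be slowly varying at infinity. This produces the stated asymptotic $u_\la(t)\sim\la^{-1}\beta_\infty(t)$.

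For $t\to 0$ I would study $1-u_\la(t)$ instead, whose Laplace transform is $\la/(s(P(s)+\la))\sim \la/(sP(s))$ as $s\to\infty$ since $P(s)\to\infty$. Inserting the $s\to\infty$ asymptotics of $P(s)$ and invoking the interchanged (infinity--origin) version of Lemma \ref{thm:karamata-feller} gives the corresponding asymptotic for $1-u_\la(t)$, and hence the desired expansion of $u_\la$ at zero. I expect the main technical obstacle to lie in case II: the $P(s)$ asymptotics require careful dominated-convergence estimates after the change of variables in order to isolate the correct multiplicative constants $\mu(0)$ and $\mu(1)$, and one must then verify that the resulting correction factors (slowly varying at infinity or at the origin, as appropriate) satisfy the hypotheses of Lemma \ref{thm:karamata-feller} in its original or interchanged form.
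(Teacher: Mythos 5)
Your proposal follows essentially the same route as the paper's proof: both rest on the Laplace representation $J_\la(s)=s^{-1}P(s)(P(s)+\la)^{-1}$, the monotonicity of $u_\la$ (hence of $1-u_\la$), and the Karamata--Feller theorem applied to $J_\la(s)$ as $s\to0$ for the large-time behavior and to $s^{-1}-J_\la(s)=\mathcal{L}(1-u_\la)(s)$ as $s\to\infty$ for the small-time behavior, with the endpoint asymptotics of $P(s)$ in case II extracted from the H\"older continuity of $\mu$ (the paper splits off the endpoint value of $\mu$ and estimates the remainder, while you use a Laplace-type substitution with dominated convergence---the same estimate in substance). The proposal is correct and matches the paper's argument.
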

\begin{proof}
Since  $u_\lambda(t)$ is  monotone, Lemma
\ref{thm:karamata-feller} applies.
First, we show the assertion as $t\to0$, which is determined by the behavior of
{$\om(s)=J_\lambda(s)$ for large $s$.}
In case I (recall $b_1=1$), we have
\begin{equation*}
\begin{split}
    J_\lambda(s) &= s^{-1}
	\left(1+\la\Big(\sum_{i=1}^m b_i s^{\al_i}\Big )^{-1}\right)^{-1}
     = s^{-1} - \la s^{-\al_1-1} + o(s^{-\al_1-1}).
\end{split}
\end{equation*}
By Lemma \ref{thm:karamata-feller} this shows
the first part of assertion \eqref{eqn:asymp-t=0}.
In case II, we have
\begin{equation*}
    P(s) = \int_0^1 e^{\alpha \log s}
	\mu(1) d\alpha + \int_0^1
	e^{\alpha \log s}(\mu(\alpha)-\mu(1))d\alpha
	= \frac{\mu(1)(s-1)}{\log s}
     + \int_0^1 (\mu(\alpha)-\mu(1))e^{\alpha \log s} d\alpha.
\end{equation*}
Since $\mu\in \C^{\ga}([0,1])$ we obtain, after integration by parts,
\begin{equation*}
  \begin{aligned}
    &\big|\int_0^1 (\mu(\alpha)-\mu(1))e^{\alpha \log s}
	  d\alpha\big| \leq C\int_0^1 (1-\alpha)^\gamma
	  e^{\alpha \log s}d\alpha \leq
     C\frac{s}{( \log s)^{1+\gamma}}.
  \end{aligned}
\end{equation*}
Hence, $P(s) = \mu(1)s(\log s)^{-1} + O(s(\log s)^{-1-\gamma})$
and $P(s)^{-1}=\log s/(s\mu(1))(1+o(1))$ as $s\to\infty.$ Thus
\begin{equation*}
	J_\lambda(s) = \frac{1}{s}\frac{1}{1+\lambda/P(s)}
  = \frac{1}{s}\sum_{k=0}^\infty(-1)^k\lambda^{k}P(s)^{-k} = s^{-1} -
  \frac{\la\log s}{s^2\mu(1)} + o(s^{-2}\log s)
\quad\text{as } s\to\infty,
\end{equation*}
and now the second part of assertion  \eqref{eqn:asymp-t=0}
follows from Lemma \ref{thm:karamata-feller}.

Next we show the asymptotic behavior as $t\to\infty$.
In case I we have for $s\to 0$
\begin{equation*}
  J_\lambda(s) =
  \sum_{i=1}^m b_is^{\al_i-1}
  L(1/s)
  \quad \text{with}\quad L(t)=
  1/(\sum_{i=1}^mb_it^{-\al_i} +\la )\sim \la^{-1}\as t\to\infty.
\end{equation*}
The function $L(t)$ is slowly varying at infinity.
Hence, by Lemma \ref{thm:karamata-feller},
\begin{equation*}
  \ul(t) \sim L(t) \frac{b_m t^{-\al_m}}{\Gamma(1-\al_m)}
  \as t\to\infty,
\end{equation*}
which shows the first part of assertion \eqref{eqn:asymp-t=inf}.

We finally consider the large
time asymptotic behavior  in case II.
It follows from the splitting and using the
H\"{o}lder continuity of  $\mu(\al)$ that
\begin{equation*}
    P(s) = \int_0^1 s^\alpha \mu(\alpha) d\alpha =
    \mu(0)\int_0^1 s^\alpha d\alpha +
    \int_0^1 s^\alpha (\mu(\alpha)-\mu(0))d\alpha
        =  -\frac{\mu(0)}{\log s} + O \Big ((\log s)^{-1-\gamma} \Big )
\end{equation*}
(see also \cite[Proposition 2.2]{Kochubei:2008} for a related estimate).
Consequently,
\begin{equation*}
    J_\lambda(s)  = \lambda^{-1}s^{-1}P(s)\sum_{k=0}^\infty\lambda^{-k} P(s)^k
    = \lambda^{-1}s^{-1}\left( -\mu(0)(\log s)^{-1}(1 +o(1)\right)
	    \quad\text{as}\ s\to0.
\end{equation*}
Thus  $J_\lambda(s) = \mu(0)\la^{-1} s^{-1}  L(1/s)(1 + o(1))$
with $L(s)=1/\log s $  slowly varying at infinity, and
Lemma \ref{thm:karamata-feller} completes the proof of the lemma.
\end{proof}

\section{Nonnegativity preservation in spatially
semidiscrete methods}\label{sec:semidis}

In this section  we  describe a general class of
spatially semidiscrete methods for \eqref{eqn:fde},
and review three specific examples of such methods mentioned in
the introduction.
We then analyze  their nonnegativity preservation properties.

As described in Section \ref{sec:intro},
we consider semidiscrete approximations for problem \eqref{eqn:fde}
of the form \eqref{eqn:sgfem-i},
where $[\cdot,\cdot]$ is an inner product in
 $X_h$, approximating the usual $L^2$ inner
 product $(\cdot,\cdot)$.
We denote the stiffness matrix by $\calS = (s_{ij})$,
with $s_{ij}=a(\phi_j,\phi_i)$, and the mass matrix by
$\calM = (m_{ij})$, with $m_{ij}=[\phi_j,\phi_i]$.
The semidiscrete problem \eqref{eqn:sgfem-i} may then be written in
 matrix form as
\begin{equation*}
  \calM \PDal U + \calS U =0,\quad \forall\, t>0, \with U(0)=V,
\end{equation*}
or, equivalently, after multiplication by
$\calM^{-1}$,
\begin{equation}\label{eqn:fem-matrix}
  \PDal U + \calH U = 0 ,\quad \forall\, t>0, \with  U(0)=V,
\where
\H=\M^{-1}\S.
\end{equation}
In  analogy with \eqref{2.0},
we find, for the solution matrix of \eqref{eqn:fem-matrix},
\begin{equation*}
  \E(t) = (\mathcal{L}^{-1}\mathcal{J})(t)=
  \frac{1}{2\pi\mathrm{i}}\int_{\Gamma_\sigma}e^{zt}\mathcal{J}(z)dz,
  \quad \mbox{with } \mathcal{J}(z) = z^{-1}P(z)(P(z)\mathcal{I}+\calH)^{-1},
\end{equation*}
where $\mathcal{I}$ is the identity matrix.
Since both the stiffness matrix $\calS$
and the mass matrix $\calM$ are symmetric
positive definite, the eigenvalue problem
$\calS \fy=\lambda\calM\fy$ has a complete system of eigenvectors $\fy_i$
with positive eigenvalues $\lambda_i$, so that
$\calH=\calM^{-1}\calS=\mathcal{Q}^{-1}
\mathcal{D}\mathcal{Q}$,
where the rows of $\mathcal{Q}$ are the eigenvectors $\fy_i$
and the diagonal elements of the diagonal
matrix $\mathcal{D}$ are the eigenvalues $\lambda_i$.
Thus, corresponding to the eigenfunction expansion
\eqref{eqn:eigen-rep}, we have
\begin{equation}
	\label{3.E}
  \E(t)=\mathcal{Q}^{-1}\mathcal{D}_{\Lambda(t)}\mathcal{Q},
   \with \mathcal{D}_{\Lambda(t)}=\mathrm{diag}(u_{\lambda_i}(t)).
\end{equation}

We now briefly review our three examples of finite element methods
and the corresponding inner
products $[\cdot,\cdot]$ in $X_h$.
The first example is
the standard Galerkin (SG) method,
with the standard $L^2(\Omega)$ inner product,
 i.e., we choose $[\cdot,\cdot]=(\cdot,\cdot)$.

Our second example is the lumped mass (LM) method,
using
\begin{equation}\label{LM-matrix}
[w,\chi]
=(w, \chi)_h = \sum_{\K \in \T_h}  Q_{\K,h}(w \chi), \with
Q_{\K,h}(f) = \tfrac13\,|\K|\sum_{j=1}^{3} f(\zK_j) \approx \int_\K f dx,
\end{equation}
where $\zK_j$, $j=1, 2,3$, are the vertices of the
triangle $K\in\T_h$ and $|K|$ is its area.
In this case the mass matrix ${\calM}$
is diagonal with positive diagonal elements.

Our third example is
the finite volume element (FVE) method, cf.
\cite{ChatzLazarovThomee:2013,ChouLi:2000}, which is  based
on a discrete version of
 the local conservation law
\begin{equation}
	\label{FVE}
  \int_V\PDal u(t) dx - \int_{\partial V}\frac{\partial u}{\partial n}ds
  = 0\quad \mbox{for }t\geq 0,
\end{equation}
valid for any $V\subset\Omega$ with a piecewise smooth boundary $\partial V$,
with $n$ the unit outward normal to $\partial V$.  The discrete method
then requires \eqref{FVE} to be satisfied
for $V=V_j,\ j=1,\dots,N$, which  are disjoint so called
{\it control volumes} associated with the
nodes $P_j$ of $\T_h$.
It can be recast as a Galerkin method, by
letting
\begin{equation*}
Y_h= \left\{ \fy\in L^2(\Omega): \fy|_{V_j} = \text{constant},
~~j=1,2,...,N; \ \fy = 0~~\text{outside}~~ \cup_{j=1}^N V_j  \right\},
\end{equation*}
 introducing the interpolation operator
$J_h:\C(\Omega)\to Y_h$
 by
$(J_hv)(P_j)=v(P_j)$, $j=1,\ldots,N$, and
then defining the inner product
$\langle\chi,\psi\rangle=(\chi,J_h\psi)$ for all $\chi,\psi
\in X_h$. The FVE method then corresponds to \eqref{eqn:sgfem-i} with
$[\cdot,\cdot]=\langle\cdot,\cdot\rangle$.
\medskip

We recall that an edge $e$ of the triangulation $\mathcal{T}_h$ is
called a Delaunay edge if the sum of the angles
$\psi_1$ and $\psi_2$ opposite $e$ is  $\le\pi$,
and  that $\mathcal{T}_h$ is a \textit{Delaunay triangulation} if all
interior edges are Delaunay. A node of $\T_h$ is said to
be \textit{strictly interior} if all its neighbors are
interior nodes, and $\T_h$ is \textit{normal} if it has a
strictly interior node, $P_j$ say, such that any neighbor of
$P_j$ has a neighbor which is not a neighbor of $P_j$.
A symmetric, positive definite matrix with non-positive off
diagonal entries is called a \textit{Stieltjes} matrix.
\medskip

We now  turn to our main goal, to determine whether the
nonnegativity preservation property
\eqref{eqn:pos-contin} remains valid for
the semidiscrete problem, i.e.,  if $\E(t)\ge0$ for $t\ge0$.
We shall first discuss the general case, and then the LM method.
Our first result states that the semidiscrete method
\eqref{eqn:fem-matrix}
does not satisfy \eqref{eqn:pos-contin}
 in general,
if the mass matrix $\calM$ is \textit{nondiagonal}, in the
sense that $m_{ij}>0$ for all neighbors $P_i,P_j$.

\begin{theorem}\label{thm:failSG}
Assume that the triangulation $\T_h$ is normal and
that the mass matrix $\calM$ is nondiagonal.
Then the solution matrix $\E(t)$ cannot be nonnegative for all $t>0$.
\end{theorem}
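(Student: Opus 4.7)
The plan is to argue by contradiction, assuming $\E(t)\ge 0$ entrywise for every $t>0$. First I exploit the small-time asymptotics from Lemma~\ref{lem:cpl_mt}: since $u_\la(t)=1-\la\beta_0(t)(1+o(1))$ as $t\to0$, the spectral representation \eqref{3.E} yields
\begin{equation*}
  \E(t)=\calI-\beta_0(t)\calH+o(\beta_0(t))\quad\text{as}\ t\to 0^+.
\end{equation*}
If $\E(t)\ge 0$ for small $t$, then for every $i\ne j$ the entry $(\E(t))_{ij}=-\beta_0(t)h_{ij}+o(\beta_0(t))$ must be nonnegative, forcing $h_{ij}\le 0$.

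The second step extracts further sign information from the identity $\calS=\calM\calH$ combined with the sparsity $s_{ij}=0$ whenever $P_i,P_j$ are not neighbors. For such a non-neighbor pair with $i\ne j$, the relation $0=s_{ij}=\sum_k m_{ik}h_{kj}$ together with $m_{ij}=0$ yields $\sum_{k\ne j}m_{ik}h_{kj}=0$. Every summand satisfies $m_{ik}\ge 0$ and $h_{kj}\le 0$, so each one must vanish. Since $\calM$ is \emph{nondiagonal}, $m_{ik}>0$ for $k=i$ and for every neighbor $P_k$ of $P_i$, so we conclude $h_{ij}=0$ and, moreover, $h_{kj}=0$ for every $k\sim i$ with $k\ne i,j$.

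Now invoke normality. Let $P_{j_0}$ be the strictly interior node guaranteed by the definition. For each neighbor $P_k$ of $P_{j_0}$ choose a neighbor $P_l$ of $P_k$ with $P_l\not\sim P_{j_0}$ and $P_l\ne P_{j_0}$; applying the conclusion of the previous step with $i=P_l$, $j=P_{j_0}$, and picking the index $P_k$ (which satisfies $P_k\sim P_l$, $P_k\ne P_l$, $P_k\ne P_{j_0}$) gives $h_{P_k P_{j_0}}=0$. Running over all neighbors and combining with $h_{ij_0}=0$ for non-neighbors of $P_{j_0}$ shows that $\calH\,e_{j_0}=h_{j_0 j_0}\,e_{j_0}$, equivalently $s_{ij_0}=h_{j_0 j_0}\,m_{ij_0}$ for every $i$. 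Summing over $i$, the left-hand side equals $a(\phi_{j_0},\sum_i\phi_i)=a(\phi_{j_0},1)=0$ because $P_{j_0}$ is strictly interior so $\sum_i\phi_i\equiv 1$ on $\mathrm{supp}\,\phi_{j_0}$, while the right-hand side equals $h_{j_0 j_0}\bigl(m_{j_0 j_0}+\sum_{i\sim j_0}m_{ij_0}\bigr)$, a product whose second factor is strictly positive by nondiagonality. Hence $h_{j_0 j_0}=0$, contradicting $h_{j_0 j_0}=s_{j_0 j_0}/m_{j_0 j_0}>0$ (since $a(\phi_{j_0},\phi_{j_0})>0$).

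The main technical obstacle is the second step: using the full-support property $m_{ik}>0$ of a nondiagonal mass matrix together with the graph sparsity of $\calS$ to force, via the constrained linear system $\calM\calH=\calS$, many off-diagonal entries of $\calH$ to vanish exactly. Without nondiagonality (for instance, for a diagonal $\calM$ as in the lumped mass method) this step collapses, in agreement with the fact that the theorem fails in that setting.
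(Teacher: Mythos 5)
Your proposal is correct and follows essentially the same route as the paper's own proof: the small-time expansion $\E(t)=\calI-\beta_0(t)\calH(1+o(1))$ forces $h_{ij}\le 0$ off the diagonal, the identity $\calS=\calM\calH$ together with the sparsity of $\calS$ and the nondiagonality of $\calM$ annihilates the off-diagonal entries of the $j_0$th column of $\calH$ (non-neighbors directly, neighbors via the normality condition), and the contradiction comes from $\sum_i s_{ij_0}=0$ versus $\sum_i m_{ij_0}>0$ and $s_{j_0j_0}>0$. Your packaging of the second step as a statement about non-neighbor pairs, and your explicit derivation of $h_{j_0j_0}=0$ versus $h_{j_0j_0}>0$, are only presentational variants of the paper's argument.
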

\begin{proof}
Assume that $\E(t)\ge0$ for $t\ge0$.
By \eqref{3.E} and
Lemma \ref{lem:cpl_mt}, we have
	\begin{equation}
		\label{3b0}
		\E(t)= \I-\beta_0(t)\H(1+o(1))
		\as t\to0,
\end{equation}
and, since $\beta_0(t)>0$ for small $t$,
the nonnegativity of $\E(t)$ implies $h_{ij}\le 0$ for all $i\neq j$.
Let $P_j$ be a strictly interior node as in the definition of $\T_h$ being normal.
We shall show that $h_{ij}=0$ for $i\ne j$.
Consider first the case
that $P_i$ is not a neighbor of $P_j$, so that $m_{ij}=s_{ij}=0$.
Since $\calS=\calM\calH$,
\begin{equation*}
  0=s_{ij}=\sum_{k=1}^N m_{ik}h_{kj}=\sum_{k=1,k\neq j}^N m_{ik}h_{kj}.
\end{equation*}
Since $h_{kj}\leq 0$ for $k\neq j$, we have $m_{ik}h_{kj}\leq 0$, $k\neq j$, and thus $m_{ik}h_{kj}=0$ for $k\neq j$,
and in particular, $h_{ij}=0$. When $P_i$ is a neighbor of $P_j$,
it has a neighbor $P_k$ which is not
a neighbor of $P_j$, and hence $s_{kj}=\sum_{l\neq j}m_{kl}h_{lj}=0$, which
implies $h_{ij}=0$ since $m_{ki}>0$,
in view of the assumption that $\calM$ is nondiagonal. Thus,
\begin{equation*}
  s_{ij}=\sum_{k=1}^N m_{ik}h_{kj}= m_{ij}h_{jj},\quad i=1,\ldots,N,
\end{equation*}
i.e., the $j$th columns of $\calS$ and $\calM$ are proportional, which
contradicts the facts that $\sum_{i=1}^Ns_{ij} = 0$ for a strictly
interior node $P_j$ and $\sum_{i=1}^Nm_{ij}>0$.
\end{proof}

Next we show a  nonnegativity preservation result
in the general case, for large time.
\begin{theorem}\label{theorem:SG}
Suppose $\calH^{-1}>0$.  Then
there exists a $t_0>0$ such that $\E(t)>0$
for all $t>t_0$.
\end{theorem}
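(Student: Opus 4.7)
The plan is to combine the spectral decomposition of $\E(t)$ from \eqref{3.E} with the large-time asymptotics of the scalar kernel $u_\la(t)$ from Lemma \ref{lem:cpl_mt} to obtain a leading-order description of $\E(t)$ proportional to $\H^{-1}$, and then to exploit the strict positivity assumption $\H^{-1}>0$ to absorb the lower-order remainder.

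First, I would start from \eqref{3.E}, namely $\E(t)=\calQ^{-1}\calD_{\Lambda(t)}\calQ$, and apply the second part of Lemma \ref{lem:cpl_mt} to each eigenvalue $\la_i$ of $\H$, writing
\begin{equation*}
u_{\la_i}(t) = \la_i^{-1}\beta_\infty(t)\bigl(1+\rho_i(t)\bigr),\qquad \rho_i(t)\to 0 \as t\to\infty,
\end{equation*}
where $\beta_\infty(t)$ is as in \eqref{eqn:asymp-t=inf}. Since $\H$ has only finitely many eigenvalues, the convergence $\rho_i(t)\to 0$ is uniform in $i$, so for every $\ep>0$ there exists $t_\ep$ with $|\rho_i(t)|\le \ep$ for all $i$ and all $t\ge t_\ep$.

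Next, I would decompose
\begin{equation*}
\E(t) = \beta_\infty(t)\,\calQ^{-1}\mathrm{diag}(\la_i^{-1})\calQ
+ \beta_\infty(t)\,\calQ^{-1}\mathrm{diag}(\la_i^{-1}\rho_i(t))\calQ
= \beta_\infty(t)\H^{-1} + R(t),
\end{equation*}
since $\H^{-1}=\calQ^{-1}\mathrm{diag}(\la_i^{-1})\calQ$. The remainder $R(t)$ satisfies an entrywise bound of the form
\begin{equation*}
|R_{ij}(t)| \le C\,\ep\,\beta_\infty(t) \for t\ge t_\ep,
\end{equation*}
where $C$ depends only on $\calQ$, $\calQ^{-1}$, and the $\la_i^{-1}$, all of which are fixed.

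Finally, using the hypothesis $\H^{-1}>0$ and the finiteness of the matrix, let $c:=\min_{i,j}(\H^{-1})_{ij}>0$. Then the $(i,j)$ entry of $\beta_\infty(t)\H^{-1}$ is at least $c\,\beta_\infty(t)$. Choosing $\ep$ so small that $C\ep<c$ and setting $t_0:=t_\ep$, every entry of $\E(t)$ is bounded below by $(c-C\ep)\beta_\infty(t)>0$ for $t>t_0$, which is the desired conclusion. The only subtle point, and the one I would take the most care with, is ensuring that the $o(1)$ in Lemma \ref{lem:cpl_mt} is uniform in $\la$ across the finite spectrum of $\H$; but this is automatic because the asymptotic expansion is applied only at finitely many values.
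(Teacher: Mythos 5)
Your proposal is correct and follows essentially the same route as the paper: the paper's proof likewise combines the eigendecomposition \eqref{3.E} with the large-time asymptotics of Lemma \ref{lem:cpl_mt} to write $\E(t)=\beta_\infty(t)\H^{-1}(1+o(1))$ and then invokes $\H^{-1}>0$. Your version merely makes explicit the uniformity of the $o(1)$ over the finite spectrum and the entrywise absorption of the remainder, which the paper leaves implicit.
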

\begin{proof}
As before, it follows from the asymptotic behavior
of the function $u_\lambda(t)$, cf. Lemma \ref{lem:cpl_mt}, that
	\begin{equation*}
	\E(t) = \beta_\infty(t)\H^{-1}(1+o(1))
  \as t\to\infty.
\end{equation*}
Since by  assumption $\calH^{-1}>0$,
we find that
$\E(t)$ is positive
for large $t$.
\end{proof}

Theorem \ref{thm:failSG} covers the SG and FVE methods,
and Theorem \ref{theorem:SG} applies also to the LM case.
We recall from \cite{Chatz:2014}
that $\calS^{-1}>0$ if $\S$ is
a Stieltjes matrix, and that $\S$ is a Stieltjes matrix
if and only if $\T_h$ is a Delaunay triangulation.
In particular, if $\T_h$ is Delaunay, then $\S^{-1}>0$, and hence also
$\H^{-1}>0$, but $\H^{-1}$ may be positive
also for some non-Delaunay triangulations.

In the case of the LM method we may prove the following
sharper result.
\begin{theorem}\label{thm:semi-lm}
Let $\E(t)$ be the solution matrix of the LM approximation. Then $\E(t)\ge 0$
for all $t\ge 0$ if and only if the
triangulation $\T_h$ is Delaunay.
\end{theorem}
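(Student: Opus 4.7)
The proof has two directions, both anchored in the integral representation of $\E(t)$ from Section~3.

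For the ``only if'' direction, I would use the short-time expansion
\begin{equation*}
\E(t)=\I-\beta_0(t)\H\bigl(1+o(1)\bigr)\as t\to0,
\end{equation*}
already established in the proof of Theorem~\ref{thm:failSG}, with $\beta_0(t)>0$. Entrywise nonnegativity of $\E(t)$ for small $t>0$ then forces $h_{ij}\le0$ for $i\ne j$. Since for the LM method $\M=\mathrm{diag}(m_i)$ with $m_i>0$, this gives $s_{ij}=m_i h_{ij}\le0$ for $i\ne j$; combined with the symmetry and positive definiteness of $\S$, this says $\S$ is Stieltjes, equivalently \cite{Chatz:2014} $\T_h$ is Delaunay.

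For the ``if'' direction, assume $\T_h$ is Delaunay, so $s_{ij}\le0$ for $i\ne j$ and hence $\H=\M^{-1}\S$ also has nonpositive off-diagonals. By choosing $c$ large enough that $c\I-\H\ge0$ entrywise, the expansion $e^{-s\H}=e^{-cs}\sum_{k\ge0}s^k(c\I-\H)^k/k!$ gives $e^{-s\H}\ge0$ entrywise for every $s\ge0$. The task is then to express $\E(t)$ as a nonnegative combination of these matrix exponentials. For $\Re P(z)>0$ (valid on $\Gamma_\si$, since $\Re z^\al>0$ for $|\arg z|<\pi/2$) one has $(P(z)\I+\H)^{-1}=\int_0^\infty e^{-sP(z)}e^{-s\H}\,ds$, and substituting this into the formula for $\E(t)$ and swapping the order of integration produces the representation
\begin{equation*}
\E(t)=\int_0^\infty e^{-s\H}\,\Psi(t,s)\,ds,\quad \Psi(t,s)=\frac{1}{2\pi\mathrm{i}}\int_{\Gamma_\si}e^{zt}z^{-1}P(z)e^{-sP(z)}\,dz=-\partial_s\Phi(t,s),
\end{equation*}
with $\Phi(t,s)=\LL^{-1}[z^{-1}e^{-sP(z)}](t)$.

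Because $P(z)=\int_0^1 z^\al\,d\nu(\al)$ is a Bernstein function---each $z^\al$ with $\al\in[0,1]$ is Bernstein and the class is closed under positive mixtures---the function $e^{-sP(z)}$ is the Laplace transform in $z$ of the probability law at time $s$ of the subordinator with Laplace exponent $P$. Consequently $\Phi(t,s)$ is that law's CDF in $t$: nonnegative, bounded by $1$, and monotone decreasing in $s$ by the pathwise monotonicity of the subordinator. Hence $\Psi(t,s)\ge0$, and since $e^{-s\H}\ge0$ as well, every entry of $\E(t)$ is nonnegative, completing the ``if'' direction. The main obstacle in this scheme is the subordination step: one must verify that $P$ is Bernstein in both case~I and case~II, justify the interchange between the resolvent integral and the Bromwich contour, and confirm the monotonicity of $\Phi(t,s)$ in $s$. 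All three points reduce to the Bernstein property of $P$, after which the matrix-valued positivity follows formally from the scalar representation.
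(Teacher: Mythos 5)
Your ``only if'' direction is exactly the paper's: extract $h_{ij}\le0$ for $i\ne j$ from the short-time expansion \eqref{3b0}, then use that $\M$ is positive diagonal to get $s_{ij}\le0$, so $\S$ is Stieltjes and $\T_h$ is Delaunay. For the ``if'' direction you take a genuinely different route. The paper does not argue at the semidiscrete level at all: it first proves the fully discrete statement (Theorem \ref{thm:LM-fully}), where nonnegativity of $\E_{n,\tau}$ follows because $\omega_0\M+\S$ is Stieltjes, combined with the sign pattern of the convolution quadrature weights (Lemma \ref{lem:quad1prop}) and the induction of Lemma \ref{lem:fullyequiv1}, and then obtains $\E(t)=\lim_{n\to\infty}\E_{n,t/n}\ge0$ from the convergence estimate of Lemma \ref{4conv}. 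Your argument stays continuous in time and exhibits $\E(t)$ as a positive mixture $\int_0^\infty e^{-s\H}\,\bigl(-d_s\Phi(t,s)\bigr)$ of the entrywise nonnegative matrices $e^{-s\H}$, using that $P$ is a Bernstein function with $P(0^+)=0$ (true in both case I and case II), so that $e^{-sP(z)}$ is the Laplace transform of the law of a subordinator and $\Phi(t,\cdot)$ is nonincreasing. This is precisely the generalization, to the multi-term and distributed-order models, of the remark the paper makes after Theorem \ref{thm:semi-lm}, where the single-term mixture is supplied by Bernstein's theorem applied to the completely monotone Mittag-Leffler function. What the paper's route buys is economy: Theorem \ref{thm:LM-fully} and Lemma \ref{4conv} are needed elsewhere anyway (the fully discrete results and Appendix B), so the semidiscrete claim comes for free with no probabilistic input. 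What your route buys is a self-contained semidiscrete proof with a structurally transparent representation of $\E(t)$. The gaps you flag are real but standard: the Fubini interchange is delicate near $s=0$ (where $\int_{\Gamma_\sigma}|z|^{-1}|P(z)|\,|dz|$ diverges, so one should work with the Stieltjes measure $-d_s\Phi$ rather than a pointwise density $\Psi$), and the monotonicity of $\Phi(t,\cdot)$ should be justified by the a.s.\ nondecreasing paths of the subordinator rather than by a direct complete-monotonicity computation. With those details supplied, the proposal is correct, but it is not the paper's argument.
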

\begin{proof}
If $\E(t)\ge 0$ for all $t\ge 0$ we conclude as
in the proof of Theorem \ref{thm:failSG} that $h_{ij}\le 0$ for
all $i\neq j$, Consequently, since $\M$ is a positive
diagonal matrix, $s_{ij}\le0$ for all $i\neq j$, so
that $\S$ is Stieltjes and hence $\T_h$  Delaunay.
The proof of the converse statement requires the corresponding result
for the fully
 discrete solution matrix $\E_{n,\tau}$,
and the convergence of the fully discrete
solution matrix to the semidiscrete one.
These two results are shown in Theorem \ref{thm:LM-fully} and Lemma \ref{4conv},
respectively, and assuming their validity we find
 $\E(t)=\lim_{n\to\infty}\E_{n,t/n}\geq0$.
\end{proof}

	We remark that in the single-term case,
the converse part of Theorem \ref{thm:semi-lm}
also follows from the representation
$\E(t)=E_\al(-t^\al\H)$
of the solution
shown in \cite{SakamotoYamamoto:2011},
where
$E_{\alpha}(z)=\sum_{k=0}^\infty z^k/\Gamma(k\alpha +1)$
is the Mittag-Leffler function.
Since, by
\cite{Pollard:1948},
$E_{\alpha}(-t)$
is completely monotone,
 Bernstein's theorem implies that
$E_{\alpha}(-y) =  \int_{0}^\infty e^{-xy} \,d\sigma(x)$, with $\si(x)$
a positive measure.
Recalling  from
\cite{Chatz:2014} that
$e^{-t\H}\ge0$
for $t\ge0$ when $\T_h$ is Delaunay,
we conclude
\[
\E(t)=E_\al(-t^\al\H)
=\int_0^\infty e^{-x\,t^\al\,\H}d\si(x)\ge0,\quad\text{for}\ t\ge0.
\]

\section{Nonnegativity preservation of a fully discrete method}
\label{sec:fully}

Now we discuss the preservation of nonnegativity for a fully discrete
method for the subdiffusion
model \eqref{eqn:fde}, developed in
\cite{JinLazarovZhou:2014,JinLazarovSheenZhou:2015}, based on
applying convolution quadrature
\cite{Lubich:1988} to 
 the semidiscrete problem \eqref{eqn:fem-matrix}.

 Using the definition of the Caputo fractional derivative, we deduce that
$ P(\partial_t) \fy(t)= P(\partial_t)\left(\fy(t)-\fy(0)\right) $,
and hence problem \eqref{eqn:fem-matrix} may then be rewritten as
\begin{equation}\label{4fdm}
  P({\partial_t}) (U(t)-V) + \calH U(t) = 0,
  \for t>0, \with U(0)=V.
\end{equation}

For the discretization in time of
\eqref{4fdm} we introduce a time step  $\tau >0$,
and set $t_n=n\tau$, $n\ge0$. For  {  a smooth function
$\fy$}, the approximation $Q_n(\fy)$, of $P(\partial_t) \fy(t_n)$,
is then given by
\begin{equation*}
	Q_n(\fy) = \sum_{j=0}^n \omega_{n-j}\fy^j, \for  n\ge0,
    \where \fy^j=\fy(t_j).
\end{equation*}
The weights $\{\omega_j\}$ are generated by the characteristic
polynomial $(1-\xi)/\tau$ of the
backward Euler (BE) method, i.e.,
$\sum_{j=0}^\infty \omega_j \xi^j= P((1-\xi)/\tau).$
The fully discrete scheme is then given by \cite{JinLazarovZhou:2014}:
\begin{equation*}
   Q_n(U) + \calH U^n=Q_n(1)V, \for  n \ge 1,\with U^0=V.
\end{equation*}
Noting that $\om_n(U^0-V)=0$,
the fully discrete solution can be explicitly represented by
\begin{equation}\label{eqn:BE_SG}
    U^n=\E_{n,\tau}\,V := (\omega_0\,\calI+  \calH)^{-1}
    \Big(\sum_{j=0}^{n-1}\omega_{j} V
    -\sum_{j=1}^{n-1} \omega_{n-j} U^{j}\Big), \for n\ge1,\with U^0=V.
\end{equation}

Clearly this relation defines a sequence of rational functions
$r_{n,\tau}(\la),\ n=0,1,2,\dots,$
such that $\E_{n,\tau}=r_{n,\tau}(\H)$, and analogously to
\eqref{3.E} we may write, with $\la_j$ the eigenvalues of $\H$,
\begin{equation}
	\label{4.E}
  \E_{n,\tau}=\mathcal{Q}^{-1}\mathcal{D}_{\Pi^n}\mathcal{Q},
   \with \mathcal{D}_{\Pi^n}=\mathrm{diag}(r_{n,\tau}(\lambda_i)).
\end{equation}

We now  turn to  nonnegativity preservation property
for the method \eqref{eqn:BE_SG}. We first show
some properties of the
quadrature weights $\{\omega_j\}$.

\begin{lemma}\label{lem:quad1prop}
Let the weights $\{\omega_j\}$ be generated by the backward Euler method.
Then
\[
		(\text{i})\qquad	\omega_0>0 \ \ \text{and}\ \
		\omega_j<0\ \for\  j\ge1,
	\quad\text{and}\qquad (\text{ii})\qquad
	\displaystyle \sum_{j=0}^n \omega_j>0, \for\  n\ge1.
	\qquad\qquad
\]
\end{lemma}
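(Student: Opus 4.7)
The plan is to reduce both claims to the single-power case $P(z)=z^{\alpha}$ and use linearity of the map $P\mapsto (\omega_j)$ in $P$. Concretely, since $P(z)=\int_0^1 z^{\alpha}\,d\nu(\alpha)$, I would interchange sum and integral in
\[
\sum_{j=0}^{\infty}\omega_j\xi^j = P\Bigl(\frac{1-\xi}{\tau}\Bigr) = \int_0^1 \tau^{-\alpha}(1-\xi)^{\alpha}\,d\nu(\alpha),
\]
which is justified by absolute convergence of the binomial series for $|\xi|<1$, and then apply the generalized binomial theorem to read off
\[
\omega_j = \int_0^1 \tau^{-\alpha}(-1)^j\binom{\alpha}{j}\,d\nu(\alpha), \qquad j\ge 0.
\]

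For part (i), the claim $\omega_0>0$ is immediate since $\omega_0=\int_0^1 \tau^{-\alpha}\,d\nu(\alpha)$ integrates a strictly positive function against a nontrivial positive measure. For $j\ge 1$ I would expose the sign of the $\alpha$-integrand by the factorization
\[
(-1)^j\binom{\alpha}{j} = -\,\frac{\alpha(1-\alpha)(2-\alpha)\cdots(j-1-\alpha)}{j!},
\]
which is strictly negative for every $\alpha\in(0,1)$ because all displayed factors are positive there. Under the hypotheses of either Case I (point masses $b_i>0$ at $\alpha_i\in(0,1)$) or Case II ($d\nu=\mu\,d\alpha$ with $\mu\ge 0$ Hölder continuous and $\mu(0)\mu(1)>0$, so $\mu>0$ on subintervals of $(0,1)$), the measure $\nu$ assigns positive mass to a subset of $(0,1)$, and the integral is strictly negative.

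For part (ii), the key auxiliary identity is the telescoped partial sum
\[
\sum_{j=0}^{n}(-1)^j\binom{\alpha}{j} = (-1)^n\binom{\alpha-1}{n} = \binom{n-\alpha}{n} = \prod_{k=1}^{n}\frac{k-\alpha}{k},
\]
which I would derive either by an induction on $n$ using Pascal's rule for generalized binomial coefficients, or instantly from the generating-function identity $(1-\xi)^{\alpha}/(1-\xi)=(1-\xi)^{\alpha-1}$ together with a Cauchy product (the left side has coefficient $\sum_{j=0}^n(-1)^j\binom{\alpha}{j}$ at $\xi^n$). Since every factor $(k-\alpha)/k$ is strictly positive for $\alpha\in[0,1)$, integration gives
\[
\sum_{j=0}^{n}\omega_j = \int_0^1 \tau^{-\alpha}\prod_{k=1}^{n}\frac{k-\alpha}{k}\,d\nu(\alpha) > 0,
\]
again by the positive-mass observation used in (i).

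There is no serious obstacle; the two points that require a line of care are the Fubini interchange in the setup (routine from absolute convergence) and the fact that the sign inequalities are \emph{strict}, which one must verify directly from the precise structural hypotheses on $\nu$ in Cases I and II rather than from the bare statement $\nu\ge 0$.
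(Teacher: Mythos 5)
Your proof is correct and follows essentially the same route as the paper's: reduction to the single-power case by linearity, the explicit binomial formula for $\omega_j$ giving part (i), and a closed form for the partial sums giving part (ii) --- your identity $\sum_{j=0}^n(-1)^j\binom{\alpha}{j}=(-1)^n\binom{\alpha-1}{n}=\prod_{k=1}^n\frac{k-\alpha}{k}$ is exactly the paper's formula \eqref{eqn:sum1} after simplifying $\tfrac{(n+1)(-1)^n}{\alpha}\binom{\alpha}{n+1}$. Your additional care in justifying the linearity reduction for the distributed-order case and in checking that the inequalities are strict (positive mass of $\nu$ on $(0,1)$) tightens a step the paper passes over with the phrase ``follows by linearity,'' but it is not a different method.
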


\begin{proof}
It suffices to show the assertion for the single-term model
($m=1$ and $\al_1=\al$), since the more general
case follows by linearity.
Property (i) follows directly from the explicit formula
\begin{equation*}
\omega_j=\tau^{-\alpha}(-1)^j\binom{\al}{j}
     =\tau^{-\alpha}(-1)^j\frac{\alpha(\alpha-1)\ldots(\alpha-j+1)}{j!}.
\end{equation*}
For (ii), we claim that the partial sum satisfies the relation
\begin{equation}\label{eqn:sum1}
   \sum_{j=0}^n \omega_j =\tau^{-\alpha}\frac{(n+1)(-1)^n}{\al}
  \binom{\al}{n+1},\for n\ge0.
\end{equation}
We show the claim by mathematical induction.
First, we note that $\omega_0=\tau^{-\alpha}$ satisfies
\eqref{eqn:sum1} with $n=0$. Next, assuming
\eqref{eqn:sum1}  holds up to $n-1$, we find
\begin{equation*}
   \sum_{j=0}^n \omega_j = \sum_{j=0}^{n-1} \omega_j +\omega_n
   =\tau^{-\alpha}\frac{n(-1)^{n-1}}{\al}
   \binom{\al}{n}
   +\tau^{-\alpha}(-1)^n \binom{\al}{n}
    =\tau^{-\alpha}\frac{(n+1)(-1)^n}{\al}
   \binom{\al}{n+1}.
\end{equation*}
Thus the relation \eqref{eqn:sum1} holds  for all $n \ge0$,
which completes the proof of the lemma.
\end{proof}

\begin{lemma}\label{lem:fullyequiv1}
Let
 $\E_{1,\tau}=\om_0\,(\omega_0\,\calI+ \calH)^{-1}\ge0$, where
$\om_0=P(\tau^{-1})$.
Then  $\E_{n,\tau}\ge0$  for  $n\ge1$.
\end{lemma}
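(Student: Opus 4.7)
The plan is to prove this by strong induction on $n$, leveraging the explicit recurrence \eqref{eqn:BE_SG} and the sign pattern of the convolution quadrature weights established in Lemma \ref{lem:quad1prop}.

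First, I would rewrite \eqref{eqn:BE_SG} at the level of the solution matrices. Substituting $U^j = \E_{j,\tau} V$ for $j=0,\dots,n-1$ (with $\E_{0,\tau}=\I$) and using that $V$ is an arbitrary vector gives the matrix recurrence
\begin{equation*}
\E_{n,\tau} = (\omega_0\,\I + \H)^{-1}\Bigl(\sum_{j=0}^{n-1}\omega_j\,\I \;-\;\sum_{j=1}^{n-1}\omega_{n-j}\,\E_{j,\tau}\Bigr),\quad n\ge 1.
\end{equation*}
A key preliminary observation is that the hypothesis $\E_{1,\tau} = \omega_0(\omega_0\,\I+\H)^{-1}\geq 0$ together with $\omega_0>0$ (part (i) of Lemma \ref{lem:quad1prop}) immediately yields $(\omega_0\,\I+\H)^{-1}\geq 0$. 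This is the nonnegative ``propagator'' that will be applied at each inductive step.

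For the induction itself, the base case $n=1$ is just the assumption. For the inductive step, assume $\E_{j,\tau}\geq 0$ for $j=1,\dots,n-1$. I would then read off the signs inside the bracket above: by part (ii) of Lemma \ref{lem:quad1prop} the scalar coefficient $\sum_{j=0}^{n-1}\omega_j$ is strictly positive, and by part (i) each $-\omega_{n-j}$ with $1\le j\le n-1$ is strictly positive. Hence the bracket is a nonnegative combination of $\I$ and the $\E_{j,\tau}$, so it is entrywise nonnegative. Multiplying by the nonnegative matrix $(\omega_0\,\I+\H)^{-1}$ preserves nonnegativity (the $(i,k)$ entry of a product of nonnegative matrices is a sum of products of nonnegative numbers), which gives $\E_{n,\tau}\ge 0$ and closes the induction.

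There is no real obstacle here; the nontrivial content is entirely packaged in Lemma \ref{lem:quad1prop}, which supplies exactly the sign information needed to conclude that the bracketed expression is a nonnegative combination. The only subtlety worth flagging is to make sure the recurrence is written after substituting $U^0=V$, so that the factor $\omega_{n-j}$ for $j=1,\dots,n-1$ (rather than $j=0,\dots,n-1$) appears in the sum involving the previously computed iterates, which is why the sign hypothesis on $\omega_j$ for $j\ge 1$ suffices.
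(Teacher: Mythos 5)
Your proof is correct and follows essentially the same route as the paper's: induction on $n$ using the recurrence \eqref{eqn:BE_SG}, with the hypothesis giving $(\omega_0\,\calI+\calH)^{-1}\ge0$ and Lemma \ref{lem:quad1prop} supplying the sign information that makes the bracketed term a nonnegative combination. The only cosmetic difference is that you phrase the induction at the level of the matrices $\E_{j,\tau}$ while the paper works with the iterates $U^j=\E_{j,\tau}V$ for arbitrary $V\ge0$, which is equivalent.
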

\begin{proof}
	Let $U^0=V\ge0$
and let $U^n$ be defined by
\eqref{eqn:BE_SG} for $n\ge1$.
For a proof by induction,
assume that $U^j\ge0$ for $j\le n-1$.
By assumption,
 $(\omega_0\,\calI+ \calH)^{-1}\ge0$,
and by
Lemma \ref{lem:quad1prop} we have
$ \sum_{j=0}^{n-1}
\omega_{j} V -\sum_{j=1}^{n-1} \omega_{n-j} U^{j} \ge0.$
Hence $U^n=\E_{n,\tau}V\ge0$, which completes the proof.
\end{proof}

Next we state the following fully discrete analogue of Theorem \ref{thm:failSG}.
\begin{theorem}
Assume that the triangulation $\T_h$ is normal and the mass matrix $\calM$ is nondiagonal. Then the
solution matrix $\E_{1,\tau}$ cannot be nonnegative for  small $\tau>0$.
\end{theorem}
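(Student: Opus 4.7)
The plan is to mimic the proof of Theorem \ref{thm:failSG}, replacing the small-$t$ asymptotics of $\E(t)$ by the small-$\tau$ expansion of the rational function $r_{1,\tau}(\la) = \omega_0/(\omega_0+\la)$. The crucial preliminary observation is that, by definition, $\omega_0 = P(\tau^{-1})\to\infty$ as $\tau\to 0^+$, so that $\omega_0^{-1}\calH$ has arbitrarily small norm. Hence, expanding the Neumann series,
\begin{equation*}
    \E_{1,\tau} = \omega_0(\omega_0\calI+\calH)^{-1} = (\calI+\omega_0^{-1}\calH)^{-1} = \calI - \omega_0^{-1}\calH + O(\omega_0^{-2}),
\end{equation*}
which plays the same role here as the expansion $\E(t) = \calI - \beta_0(t)\calH(1+o(1))$ did in the proof of Theorem \ref{thm:failSG}.

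First, I would assume for contradiction that $\E_{1,\tau}\ge0$ for all sufficiently small $\tau>0$. Since the off-diagonal entries of $\calI$ vanish and $\omega_0^{-1}>0$, reading off the off-diagonal entries of the Neumann expansion forces $h_{ij}\le 0$ for all $i\ne j$, exactly as in the semidiscrete argument.

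Next, I would import verbatim the combinatorial step from the proof of Theorem \ref{thm:failSG}: choose a strictly interior node $P_j$ guaranteed by the normality of $\T_h$; use $\calS=\calM\calH$ together with $h_{kj}\le 0$ for $k\ne j$ to conclude $h_{ij}=0$ whenever $P_i$ is not a neighbor of $P_j$; then use the nondiagonality of $\calM$ (specifically $m_{ki}>0$ for neighboring $P_k,P_i$) to propagate this to any neighbor $P_i$ of $P_j$ via an auxiliary $P_k$ which is a neighbor of $P_i$ but not of $P_j$. This shows the $j$-th columns of $\calS$ and $\calM$ are proportional, contradicting $\sum_{i}s_{ij}=0$ (valid at a strictly interior node) together with $\sum_i m_{ij}>0$.

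The main obstacle is really just to justify the small-$\tau$ expansion cleanly enough that the sign of the leading correction is unambiguous; once that is in place, the argument reduces to the same linear-algebraic step already carried out for Theorem \ref{thm:failSG}, so no new ideas beyond the substitution $\beta_0(t)\rightsquigarrow \omega_0^{-1}$ are needed.
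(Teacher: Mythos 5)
Your proposal is correct and follows essentially the same route as the paper: both reduce $\E_{1,\tau}=(\calI+\omega_0^{-1}\calH)^{-1}$ to the expansion $\calI-\omega_0^{-1}\calH(1+o(1))$ as $\tau\to0$ (the paper merely makes $\omega_0^{-1}=P(\tau^{-1})^{-1}\sim\wt\beta_0(\tau)$ explicit via Lemma \ref{lem:cpl_mt}, whereas you only use $\omega_0^{-1}\to0^+$, which suffices), and then both invoke verbatim the combinatorial argument from the proof of Theorem \ref{thm:failSG}.
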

\begin{proof}
From the proof of Lemma \ref{lem:cpl_mt} we find as $  \tau\to0$
\begin{equation*}
\omega_0^{-1} =
P(\tau^{-1})^{-1}
=\wt\beta_0(\tau)(1+o(1)) 
\where \, \,
\wt\beta_0(\tau)
=\begin{cases}
\displaystyle     \tau^{\al_1}, &\ \mbox{in case I},
\\
{\displaystyle     \mu(1)^{-1}{\tau\log (\tau^{-1})},}
&\ \mbox{in case II},
\end{cases}
\end{equation*}
and hence
\begin{equation}\label{eqn:firststep}
	\E_{1,\tau}= (\calI+\om^{-1}_0\calH)^{-1}
	=\I-\wt\beta_0(\tau)\H\,(1+o(1)). 
\end{equation}
Since $\wt\beta_0(\tau)>0$,
the  argument in the proof of Theorem \ref{thm:failSG} then
completes the proof.
\end{proof}

We now show some results concerning positivity
thresholds for the fully discrete method.
\begin{theorem}\label{thm:SG-full} The following statements hold.
\begin{itemize}
  \item[(i)] If $\calH^{-1}>0$, then there exists a
	  $\tau_0>0$ such that $\E_{n,\tau}\ge0$ for
	  $\tau\ge\tau_0,\ n\ge 1$.
  \item[(ii)] If $\E_{1,\tau_0}\geq0$, then
	  $\E_{n,\tau}\geq0$ for $\tau\geq \tau_0,\ n\ge1$,
	  and
$\calH^{-1}\geq0$.
\end{itemize}
\end{theorem}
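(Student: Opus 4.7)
The plan is to reduce both statements to nonnegativity of the single-step matrix $\E_{1,\tau}=\omega_0(\omega_0\I+\H)^{-1}$ with $\omega_0=P(\tau^{-1})$, and then apply Lemma \ref{lem:fullyequiv1} to propagate nonnegativity from $n=1$ to all $n\ge 1$. Since $P$ is strictly increasing on $(0,\infty)$, $\omega_0(\tau)$ decreases strictly from $+\infty$ to $0$ as $\tau$ ranges over $(0,\infty)$. For part (i), as $\tau\to\infty$ we have $\omega_0\to 0$, and continuity of matrix inversion gives $(\omega_0\I+\H)^{-1}\to\H^{-1}$ entrywise. Since by hypothesis $\H^{-1}>0$ strictly, entrywise continuity in $\omega_0$ provides a threshold $\tau_0>0$ such that $(\omega_0\I+\H)^{-1}>0$ for every $\tau\ge\tau_0$. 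Then $\E_{1,\tau}\ge 0$, and Lemma \ref{lem:fullyequiv1} yields $\E_{n,\tau}\ge 0$ for all $n\ge 1$.

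For part (ii), set $\bar\omega=P(\tau_0^{-1})$, so that the hypothesis $\E_{1,\tau_0}\ge 0$ is equivalent to $(\bar\omega\I+\H)^{-1}\ge 0$. For arbitrary $\tau\ge\tau_0$, write $\omega=P(\tau^{-1})\in(0,\bar\omega]$. The key identity is the Neumann expansion
\begin{equation*}
(\omega\I+\H)^{-1}=\bigl[(\bar\omega\I+\H)-(\bar\omega-\omega)\I\bigr]^{-1}=\sum_{k=0}^{\infty}(\bar\omega-\omega)^k(\bar\omega\I+\H)^{-(k+1)},
\end{equation*}
which converges because the spectral radius of $(\bar\omega\I+\H)^{-1}$ equals $(\bar\omega+\lambda_{\min}(\H))^{-1}$, while $\bar\omega-\omega<\bar\omega<\bar\omega+\lambda_{\min}(\H)$. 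Since $\bar\omega-\omega\ge 0$ and positive integer powers of an entrywise nonnegative matrix remain entrywise nonnegative, every summand is nonnegative, and hence so is the sum. This gives $\E_{1,\tau}\ge 0$ for every $\tau\ge\tau_0$, and Lemma \ref{lem:fullyequiv1} then yields $\E_{n,\tau}\ge 0$ for all $n\ge 1$. Finally, letting $\tau\to\infty$, so $\omega\to 0^+$, the continuous limit $(\omega\I+\H)^{-1}\to\H^{-1}$ gives $\H^{-1}\ge 0$ as a pointwise limit of nonnegative matrices.

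The substantive step is the Neumann expansion around $\bar\omega$ with nonnegative coefficient $(\bar\omega-\omega)^k$: this single identity simultaneously produces the family of fully discrete nonnegativity statements for $\tau\ge\tau_0$ and, via the limit $\omega\to 0$, the resolvent inequality $\H^{-1}\ge 0$. The remaining ingredients---continuity of the resolvent and the already-established Lemma \ref{lem:fullyequiv1}---are routine and serve only in supporting roles.
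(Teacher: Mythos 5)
Your proof is correct and follows the paper's overall strategy: reduce everything to the single-step matrix $\E_{1,\tau}=\omega_0(\omega_0\calI+\calH)^{-1}$, propagate with Lemma \ref{lem:fullyequiv1}, use continuity of the resolvent at $\omega_0=0$ together with monotonicity of $\tau\mapsto P(\tau^{-1})$ for part (i), and obtain $\calH^{-1}\ge0$ as the entrywise limit of nonnegative matrices as $\omega_0\to0^+$. The one place where you genuinely diverge is the heart of part (ii). The paper writes $(\omega_0\calI+\calH)^{-1}=(\tilde\omega_0\calI+\calH)^{-1}(\calI-\mathcal{K})^{-1}$ with $\mathcal{K}=\delta(\tilde\omega_0\calI+\calH)^{-1}$ and expands $(\calI-\mathcal{K})^{-1}$ as a Neumann series, but it secures convergence only through a norm bound $|\mathcal{K}|<1$, which forces $\delta$ to be small; it must then run a continuation argument (repeatedly decreasing $\omega_0$, passing to limits, and arguing that the infimum of admissible values is $0$) to cover all of $[0,\tilde\omega_0]$. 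You instead compute the spectral radius of $(\bar\omega\calI+\calH)^{-1}$ exactly---it equals $(\bar\omega+\lambda_{\min}(\calH))^{-1}$ because $\calH=\calM^{-1}\calS$ is similar to a positive diagonal matrix---and note that $\bar\omega-\omega\le\bar\omega<\bar\omega+\lambda_{\min}(\calH)$, so the very same series converges for every $\omega\in[0,\bar\omega]$ in one stroke, with all terms entrywise nonnegative. This eliminates the bootstrapping entirely and is cleaner than the published argument; the only extra input is the positivity of the eigenvalues of $\calH$, which the paper has already established from the symmetric positive definiteness of $\calM$ and $\calS$.
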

\begin{proof}
	If $\calH^{-1}>0$, then by continuity,
$(\omega_0\calI+\calH)^{-1}\ge0$ for $\omega_0$ small, i.e.,
for $\tau$ large. In particular, there exists $\tau_0\ge0$ such that
$\E_{1,\tau}\ge0$ for $\tau\ge\tau_0$, and hence $\E_{n,\tau}\ge0$
for $\tau\ge\tau_0,\ n\ge1$,
	by Lemma \ref{lem:fullyequiv1}.

For part (ii), we note that if $\E_{1,\tau_0}\ge0$,
then $(\wt\om_0\I+\H)^{-1}\ge0$, with $\wt\om_0=P(\tau_0^{-1})$.
We show that then
$(\omega_0\calI+\calH)^{-1}\geq0$ for
$\omega_0\in[0,\tilde\omega_0]$.
In fact, with $\delta=\tilde\omega_0-\omega_0>0$, we may write
\begin{equation*}
(\omega_0\,\calI +\calH)^{-1}
  = (\tilde\omega_0\calI + \calH -\delta \calI)^{-1}
  =(\tilde \omega_0\calI+\calH)^{-1}(\calI-\mathcal{K})^{-1},
\with \mathcal{K}=\delta(\tilde\omega_0\,\calI+\calH)^{-1}.
\end{equation*}
Note that $\mathcal{K}\geq 0$ by assumption,
and if $\delta$ is so small that
for some matrix norm $|\cdot|$, $|\mathcal{K}|
=\delta |(\tilde\omega_0\,\calI+\calH)^{-1}|<1$,
then $(\calI-\mathcal{K})^{-1}=\sum_{j=0}^\infty \mathcal{K}^j\geq0$,
and therefore, $(\omega_0\,\calI+\calH)^{-1}\geq0$.
But if $(\omega_0\,\calI+\calH)^{-1}\geq0$
for $\omega_0\in(\tilde{\tilde{\omega}}_0,\tilde\omega_0]$,
with $\tilde{\tilde{\omega}}_0\geq0$,
then $(\tilde{\tilde{\omega}}_0\,\calI+\calH)^{-1}\geq0$.
Hence, by repeating the argument, we deduce $(\omega_0\calI+\calH)^{-1}\geq0$
for some $\omega_0<\tilde{\tilde{\omega}}_0$,
and thus the smallest such $\tilde{\tilde{\omega}}_0$ has to be 0.
Since $P(\tau^{-1})$ is monotone,
this means that $\E_{1,\tau}\ge0$ for $\tau\ge\tau_0$, and thus
shows the first part of (ii), by
Lemma \ref{lem:fullyequiv1}.
The last part of (ii) now follows from
$\H^{-1}=
\lim_{\om_0\to0}(\om_0\I+\H)^{-1}$.
\end{proof}

We note that if $\H^{-1}>0$ and $\om_0$ is the largest number
such that $(\om_0\I+\H)^{-1}\ge0$, then in the single-term case
the positivity threshold equals $\om_0^{-1/\al}$. For $\om_0>1$, this
number increases with $\al\in(0,1]$, with
its largest value for $\al=1$, i.e., for the heat equation,
and tends to zero as $\al\to0$.

The following result gives  a somewhat more precise estimate
for the positivity threshold $\omega_0$,
in the case of ``strictly"  Delaunay triangulations.
\begin{theorem}\label{thm:threshold}
If $s_{ij}<0$ for all neighbors $P_i,P_j$
and $\calM$ is nondiagonal, then $\E_{n,\tau}\geq 0$
for all $n\ge1$ if
$\omega_0 \leq  \max_\mathcal{N} {|s_{ij}|}/{m_{ij}},$
where $\mathcal{N}=\{(i,j):\ P_i,P_j\ \mbox{neighbors}\}$.
\end{theorem}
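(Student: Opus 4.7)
The plan is to reduce everything to verifying $\E_{1,\tau}\ge 0$: once this is in hand, Lemma \ref{lem:fullyequiv1} propagates nonnegativity to every $n\ge 1$. Recalling the definition $\E_{1,\tau}=\omega_0(\omega_0\I+\H)^{-1}$, and factoring out the mass matrix using $\H=\M^{-1}\S$, I would first establish the identity
\begin{equation*}
(\omega_0\,\I+\H)^{-1}=(\omega_0\M+\S)^{-1}\M,
\end{equation*}
so that the claim amounts to showing $(\omega_0\M+\S)^{-1}\M\ge 0$ entrywise.

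Next I would analyze the matrix $A:=\omega_0\M+\S$. Since $\omega_0>0$ and both $\M$ and $\S$ are symmetric positive definite, $A$ is again symmetric positive definite. Its off-diagonal entries are $a_{ij}=\omega_0 m_{ij}+s_{ij}$. For pairs $(i,j)\notin\mathcal{N}$ the nodal basis functions have disjoint supports, whence $m_{ij}=s_{ij}=0$ and $a_{ij}=0$. For neighboring pairs $(i,j)\in\mathcal{N}$ the hypotheses give $s_{ij}<0$ and $m_{ij}>0$, and the assumed upper bound $\omega_0\le |s_{ij}|/m_{ij}$ for every such pair forces $\omega_0 m_{ij}+s_{ij}\le 0$. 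Hence $A$ has non-positive off-diagonal entries together with a positive definite symmetric structure, i.e., $A$ is a Stieltjes matrix, and consequently $A^{-1}\ge 0$ by the standard characterization already invoked in Section \ref{sec:semidis}.

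To finish, I would observe that $\M$ itself has only nonnegative entries (the standard basis products in either the SG or FVE inner product are nonnegative), so $A^{-1}\M$ is a product of entrywise nonnegative matrices and is therefore entrywise nonnegative. Multiplying by $\omega_0>0$ gives $\E_{1,\tau}\ge 0$, and Lemma \ref{lem:fullyequiv1} then yields $\E_{n,\tau}\ge 0$ for all $n\ge 1$.

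The main obstacle is really only the algebraic reformulation of $(\omega_0\I+\H)^{-1}$ as $(\omega_0\M+\S)^{-1}\M$; the rest is bookkeeping, since once $\omega_0\M+\S$ is exhibited as a Stieltjes matrix under the stated bound, the M-matrix machinery delivers the conclusion. No information from the fractional-calculus side of the problem is used beyond what is already encoded in the sign properties of the convolution quadrature weights from Lemma \ref{lem:quad1prop}, which is precisely what lets the step $n=1$ bootstrap to all $n\ge 1$.
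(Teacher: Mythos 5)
Your proof is correct and follows essentially the same route as the paper: reduce to $\E_{1,\tau}\ge 0$ via Lemma \ref{lem:fullyequiv1}, rewrite $(\omega_0\,\I+\H)^{-1}=(\omega_0\M+\S)^{-1}\M$, and observe that the bound on $\omega_0$ makes $\omega_0\M+\S$ a Stieltjes matrix, hence with nonnegative inverse. (Note that both you and the paper implicitly read the hypothesis as $\omega_0\le |s_{ij}|/m_{ij}$ for \emph{every} neighboring pair, i.e., a minimum over $\mathcal{N}$ rather than the maximum written in the theorem statement.)
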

\begin{proof}
If the assumptions hold, we have $\omega_0\,m_{ij}+s_{ij}<0$ for
$j\neq i$, so that $\omega_0\,\calM +\calS$ is a Stieltjes
matrix. Hence $(\om_0\,\I+\H)^{-1}=(\omega_0\,\calM+\calS)^{-1}\M\geq0$,
and thus $\E_{n,\tau}\geq 0$ for $n\ge1$, by Lemma \ref{lem:fullyequiv1}.
\end{proof}

In particular, for the single-term  model,  in the case of a
quasiuniform  family of triangulation $\mathcal{T}_h$, with
$\psi_1+\psi_2\leq\gamma<\pi$ for all edges $P_{i}P_j$, we have
$\E_{n,\tau}\geq0$ for $n\ge1$,
if $\tau\geq c_\alpha h^{2/\alpha}$, with $c_\al>0$.

We finally show the following fully discrete analogue of
Theorem \ref{thm:semi-lm}
for the LM method.
\begin{theorem}\label{thm:LM-fully}
Let $\E_{n,\tau}$ be the solution matrix of the LM method.
Then $\E_{n,\tau}\geq 0$ for all $\tau>0$,
$n\ge1$, if  $\T_h$ is Delaunay, and, conversely,
if $\E_{1,\tau}\ge0$ for all $\tau>0$, then
$\T_h$ is Delaunay.
\end{theorem}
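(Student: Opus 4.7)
The plan is to exploit the diagonal nature of the mass matrix $\calM$ in the LM method, which makes the off-diagonal sign pattern of $\calH=\calM^{-1}\calS$ coincide with that of $\calS$. By the characterization recalled after Theorem \ref{theorem:SG}, $\calS$ has non-positive off-diagonal entries if and only if $\T_h$ is Delaunay, so the proof reduces to controlling the off-diagonal signs of $\calH$ via $\E_{1,\tau}$ and then invoking Lemma \ref{lem:fullyequiv1} to propagate to all $n\geq1$.

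For the forward direction, assume $\T_h$ is Delaunay. Then $s_{ij}\le 0$ for $i\ne j$. Fix $\tau>0$ and write
\begin{equation*}
(\omega_0\calI+\calH)^{-1}=(\omega_0\calM+\calS)^{-1}\calM,
\end{equation*}
using that $\omega_0\calI=\calM^{-1}(\omega_0\calM)$ since $\calM$ is diagonal. The matrix $\omega_0\calM+\calS$ is symmetric, positive definite (sum of two SPD matrices), and its off-diagonal entries equal $s_{ij}\le0$ because $\omega_0\calM$ contributes only to the diagonal. Hence $\omega_0\calM+\calS$ is a Stieltjes matrix, so its inverse is nonnegative, and multiplying on the right by the nonnegative diagonal matrix $\calM$ preserves nonnegativity. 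Thus $\E_{1,\tau}=\omega_0(\omega_0\calI+\calH)^{-1}\ge0$, and Lemma \ref{lem:fullyequiv1} gives $\E_{n,\tau}\ge0$ for every $n\ge1$.

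For the converse, suppose $\E_{1,\tau}\ge0$ for all $\tau>0$. Using the expansion \eqref{eqn:firststep},
\begin{equation*}
\E_{1,\tau}=\calI-\widetilde\beta_0(\tau)\,\calH\bigl(1+o(1)\bigr)\quad\text{as }\tau\to 0,
\end{equation*}
with $\widetilde\beta_0(\tau)>0$. Taking the $(i,j)$ entry with $i\ne j$, nonnegativity forces $-\widetilde\beta_0(\tau)h_{ij}(1+o(1))\ge 0$ for all sufficiently small $\tau$, so $h_{ij}\le 0$. Since $\calM$ is diagonal with positive entries, $s_{ij}=m_{ii}h_{ij}\le 0$ for all $i\ne j$, so $\calS$ is Stieltjes and $\T_h$ is Delaunay.

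I expect no serious obstacle here, since each direction hinges on one clean observation: the forward part exploits that diagonal $\calM$ leaves the Stieltjes structure of $\omega_0\calM+\calS$ intact for every $\omega_0>0$ (something which fails for SG and FVE, where $\calM$ would contribute positive off-diagonals and thus needs $\omega_0$ small enough, i.e., $\tau$ large enough), while the converse is a direct consequence of the first-order expansion of $\E_{1,\tau}$ in \eqref{eqn:firststep} together with the diagonal form of $\calM$ enabling the transfer from $h_{ij}\le 0$ to $s_{ij}\le 0$.
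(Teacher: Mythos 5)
Your proof is correct and follows essentially the same route as the paper: the forward direction rests on the identity $(\omega_0\calI+\calH)^{-1}=(\omega_0\calM+\calS)^{-1}\calM$ and the fact that $\omega_0\calM+\calS$ remains Stieltjes for diagonal $\calM$, followed by Lemma \ref{lem:fullyequiv1}, while the converse uses the expansion \eqref{eqn:firststep} to force $h_{ij}\le0$ and hence $s_{ij}\le0$. The only difference is that you spell out the intermediate steps (why $\omega_0\calM$ only affects the diagonal, and the transfer $s_{ij}=m_{ii}h_{ij}$) slightly more explicitly than the paper does.
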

\begin{proof}
If $\mathcal{T}_h$ is Delaunay, then
the stiffness matrix $\calS$ is Stieltjes, and so is
$\omega_0 \calM+ \calS$, for any $\om_0\ge0$. Hence
$(\omega_0\calI + \calH)^{-1}
= (\omega_0 \calM +\calS)^{-1} \calM\ge 0$.
and, by Lemma \ref{lem:fullyequiv1}, $\E_{n,\tau}\ge0$ for all
$\tau>0, n\ge1$.
Conversely, if $\E_{1,\tau}\geq 0$
for all $\tau>0$, then \eqref{eqn:firststep} holds  for $\tau$ small.
Hence $h_{ij}\le0$ and thus $s_{ij}\le0$ for
$P_i.P_j$ neighbors, so that $\calS$ is Stieltjes and $\T_h$ is Delaunay.
\end{proof}

\section{Numerical experiments and discussions}\label{sec:numer}
In this section we present numerical experiments to illustrate and
to complement our theoretical findings.
We consider the following five
domains with different triangulations, cf. Fig \ref{fig:meshes}:
\begin{itemize}
 \item[(a)] The unit square $\Omega=(0,1)^2$, partitioned by
	 uniform Delaunay triangulations;
 \item[(b)] The unit square $\Omega=(0,1)^2$, partitioned
	 into	  a structured family of non-Delaunay triangulations;
 \item[(c)] The L-shaped domain $\Omega={(0,1)^2\backslash([1/2,1)
 \times(0,1/2])}$,
	 partitioned by unstructured Delaunay triangulations;
 \item[(d)] The unit disk $\Omega=\{(x,y): x^2+y^2 < 1\}$,
	 partitioned by unstructured Delaunay triangulations;
 \item[(e)] The unit square $\Omega=(0,1)^2$, partitioned
	into
	 non-Delaunay triangulations, obtained from
the uniform triangulations in Example (a) with an additional subdivision
	 of one boundary triangle.

\end{itemize}

Our numerical experiments cover single-term ($m=1$,
 $\alpha_1=\alpha\in(0,1)$),
multi-term (with $m=2$,  $1>\alpha_1>\alpha_2>0$),
and distributed order cases
of problem \eqref{eqn:fde},
to be made specific below.
For each example, we investigate the preservation  of
nonnegativity for the spatially semidiscrete and
the fully discrete SG, FVE and LM methods. In the { third} case,
of course, calculations are only needed for the non-Delaunay
triangulations in (b) and (e).
For our numerical calculations of the solution
matrices $\E(t)$ we
use the eigenvalue decomposition \eqref{3.E},
where the $u_{\la_j}(t)$
are defined as inverse Laplace transforms in \eqref{22}.
For the latter we
employ a numerical algorithm from
\cite{WeidemanTrefethen:2007},
deforming the contour $\Gamma_\sigma$ to the left branch
 of a hyperbola in the complex plane with the parametric representation
$z(\xi):= \lambda(1+\sin(\mathrm{i} \xi-\psi ))$,
$\lambda>0$, $\psi\in(0,\pi/2)$ and $\xi \in \mathbb{R}$.
The resulting integral is then approximated by a truncated
trapezoidal rule, which, for an appropriate choice of the parameters
$\la$ and $\psi$,
is  exponentially convergent.

\begin{figure}[hbt!]
\subfigure[ uniform mesh]{
\includegraphics[trim = .1cm .1cm .1cm .1cm, clip=true,width=.18\textwidth]{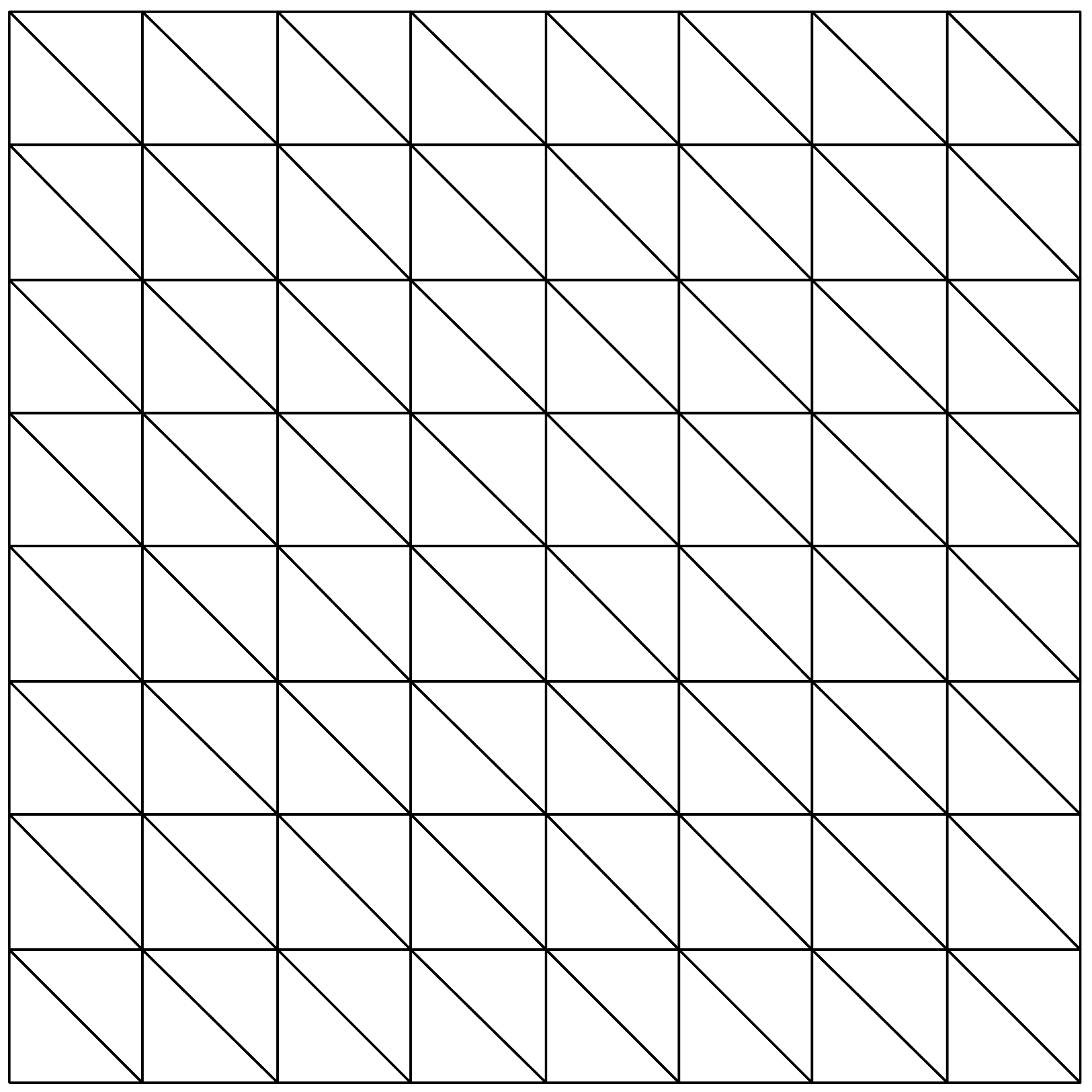}
}
\subfigure[nonDelaunay mesh]{
\includegraphics[trim = .1cm .1cm .1cm .1cm, clip=true,width=.18\textwidth]{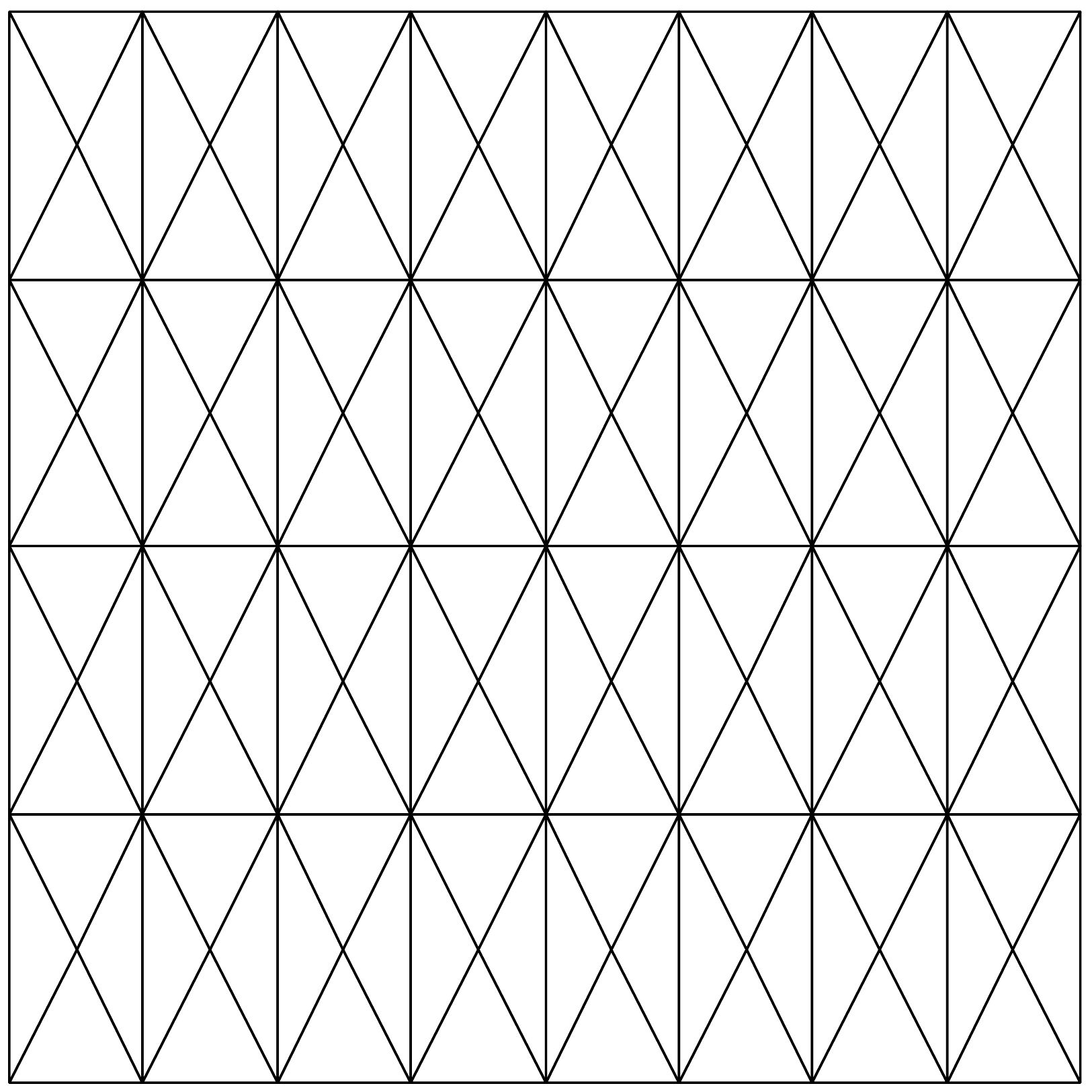}
}
\subfigure[L-shape domain]{
\includegraphics[trim = .1cm .1cm .1cm .1cm, clip=true,width=.18\textwidth]{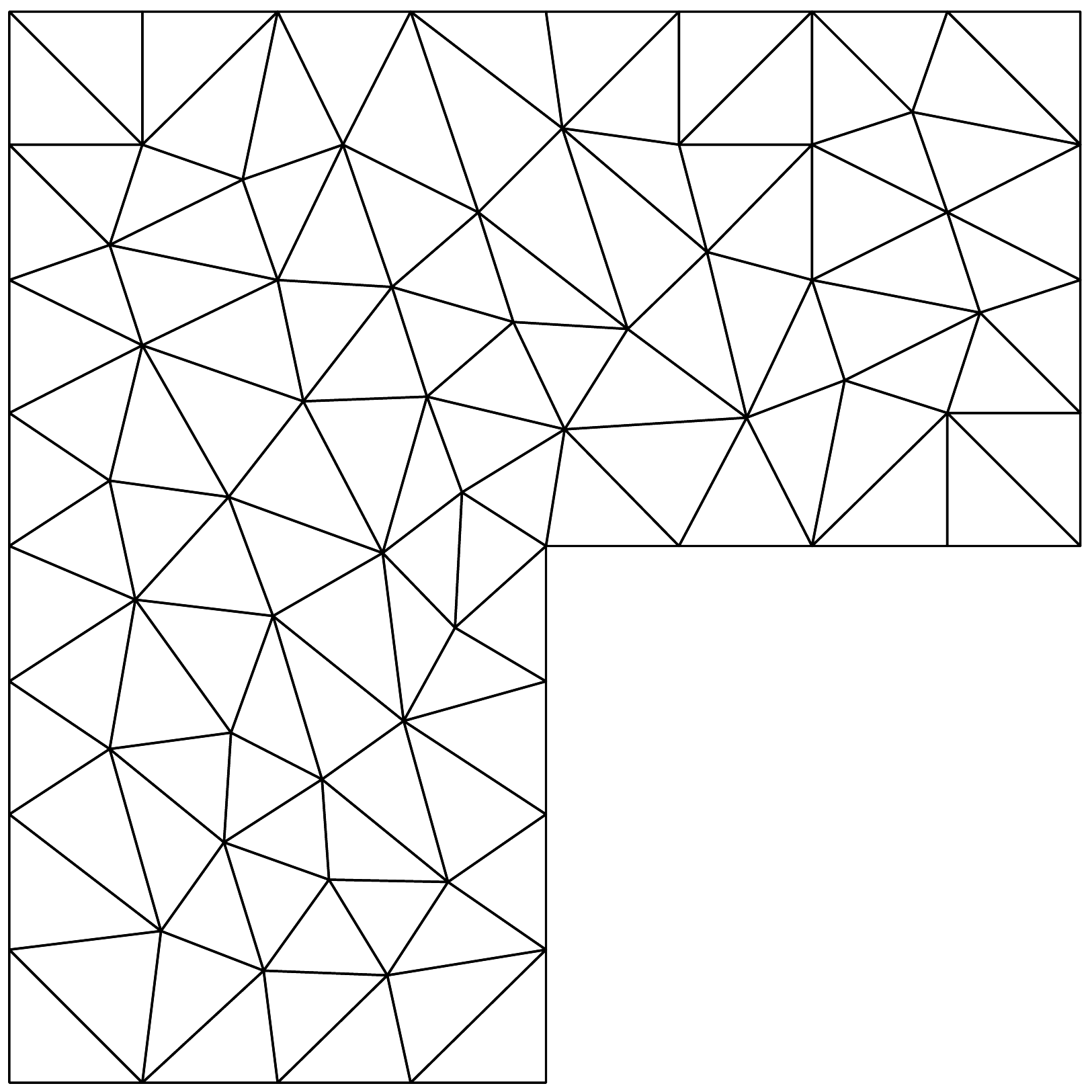}
}
\subfigure[unit disk]{
\includegraphics[trim = .1cm .1cm .1cm .1cm, clip=true,width=.18\textwidth]{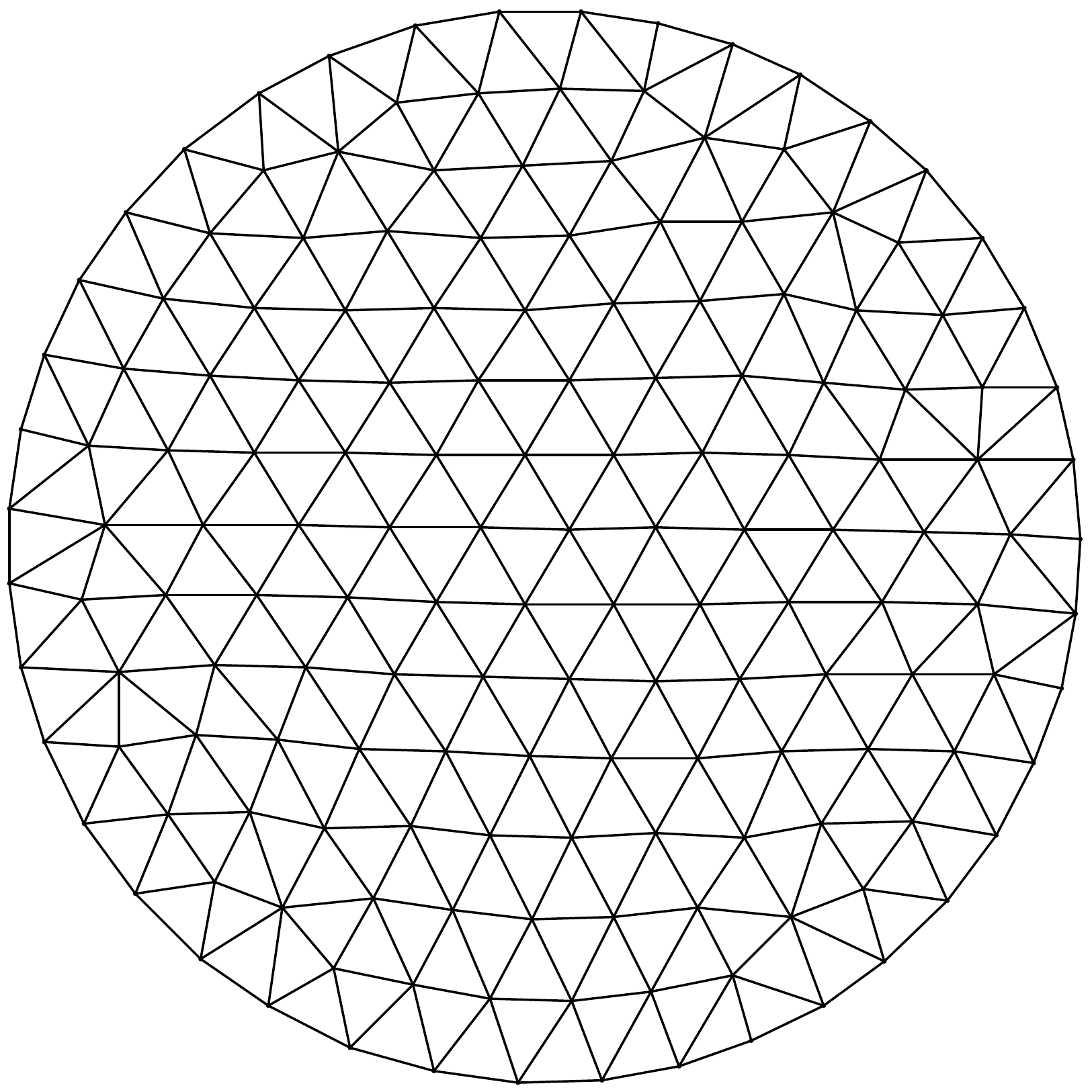}
}
\subfigure[nonDelaunay mesh]{\includegraphics[trim = .1cm .1cm .1cm .1cm, clip=true,width=.18\textwidth]{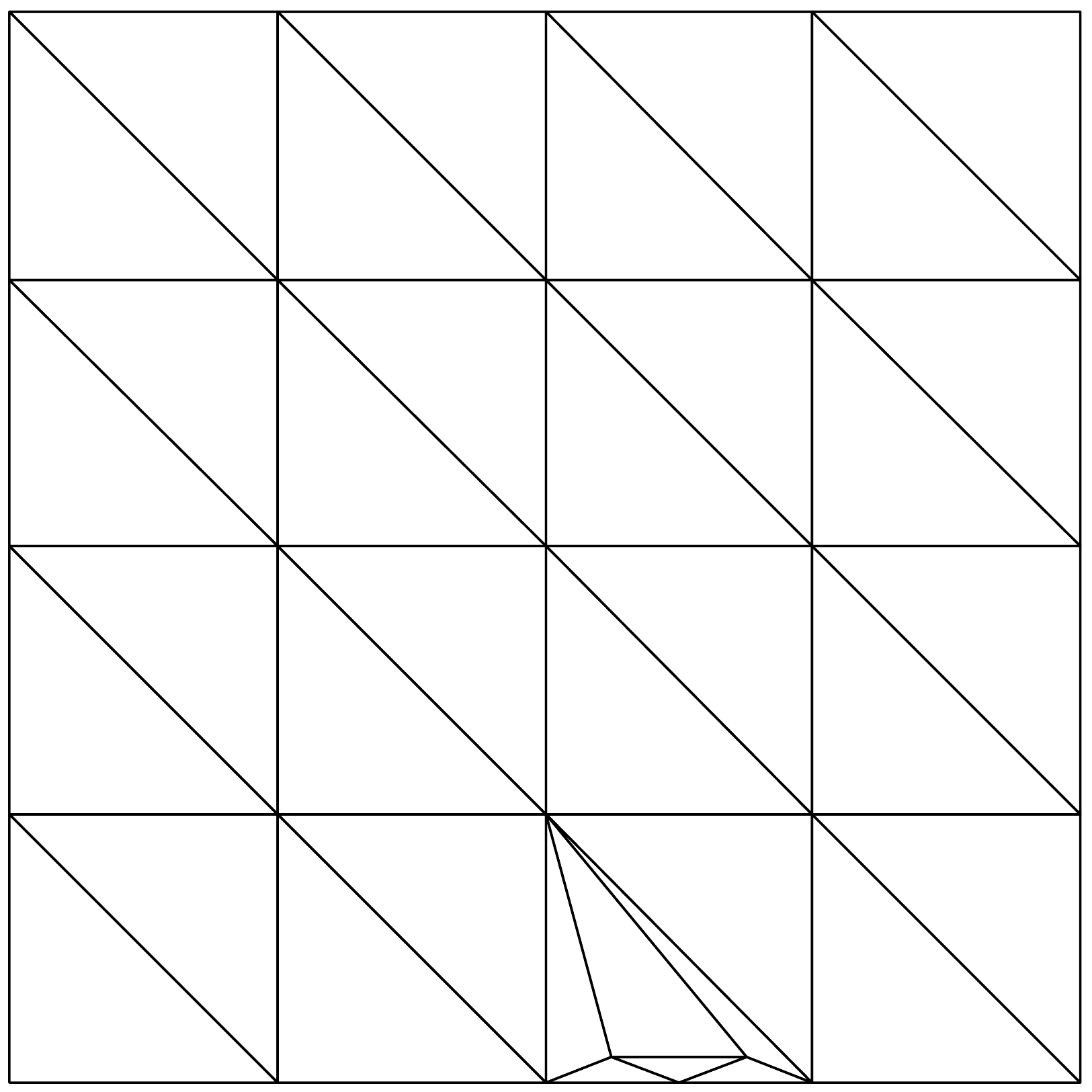}
}
 \caption{The domains with triangulations
 of Examples  (a), (b), (c), (d) and (e). }
\label{fig:meshes}
\end{figure}

{\it Example (a).} {As} in \cite{Chatz:2014}
we begin by studying a family of uniform triangulations
$\T_h $, constructed as follows. For given $M>0$ we divide
the sides of $\Om$ into $M$ equal subintervals
of length $h_0=1/M$, thus dividing $\Omega$
into $M^2$ small squares. By means of parallel diagonals
in the small squares we then  obtain a uniform triangulation,
with $h=\sqrt{2}h_0$. Note that for such
triangulations, the stiffness matrix $\calS=(s_{ij})$
has $s_{ii}=4$ and $s_{ij}=-1$ if $P_iP_j$ are vertical
or horizontal neighbors, with $s_{ij}=0$ otherwise.
In particular $\S$ is Stieltjes, with $\calS^{-1}>0$, and
hence $\calH^{-1}=\calS^{-1}\calM>0$ for the SG, LM and FVE methods.
Thus the results in Sections \ref{sec:semidis}
and \ref{sec:fully} concerning nonnegativity preservation
for large $t$ and $\tau$ apply.

In Fig. \ref{fig:semi_a}, we plot the smallest entry of the
solution matrix $\E(t)$ for the
semidiscrete SG, LM and FVE methods with {$h_0=0.100$}.
The LM method preserves nonnegativity, and
in Table \ref{tab:semi-fully_a} we present threshold
values $\hat t_0$ and $\tilde t_0$
for the semidiscrete SG and FVE methods, respectively,
for $M=10$, $20$ and $40$. Their behavior is
in agreement with  Theorems \ref{thm:failSG},  \ref{theorem:SG}  and \ref{thm:semi-lm}.
Like in \cite{Chatz:2014} the thresholds are smaller for the FVE methods
 than for the SG methods and  decrease with the fractional order $\alpha$ in the single-term case.
We also observe that the thresholds decrease with $h$. For the fully discrete method,
the positivity thresholds $\hat\tau_0$ and $\tilde\tau_0$ of $\E_{1,\tau}$ are also given
in Table \ref{tab:semi-fully_a}. Note that the triangulations do not satisfy the condition in Theorem
\ref{thm:threshold}.
Nevertheless, following the argument in \cite[Section 5.1]{Chatz:2014},
lower bounds for the positivity thresholds may be derived in the fully discrete case.
By Table \ref{tab:semi-fully_a} it appears that,
for the single-term model, this is of
order $O(h^{2/\alpha})$, and
 the multi-term model
shows a similar behavior.

\begin{figure}[th!]
\subfigure[single-term, $\al=0.5$]{
\includegraphics[trim = 1.1cm .1cm .1cm .1cm, clip=true,width=5cm]{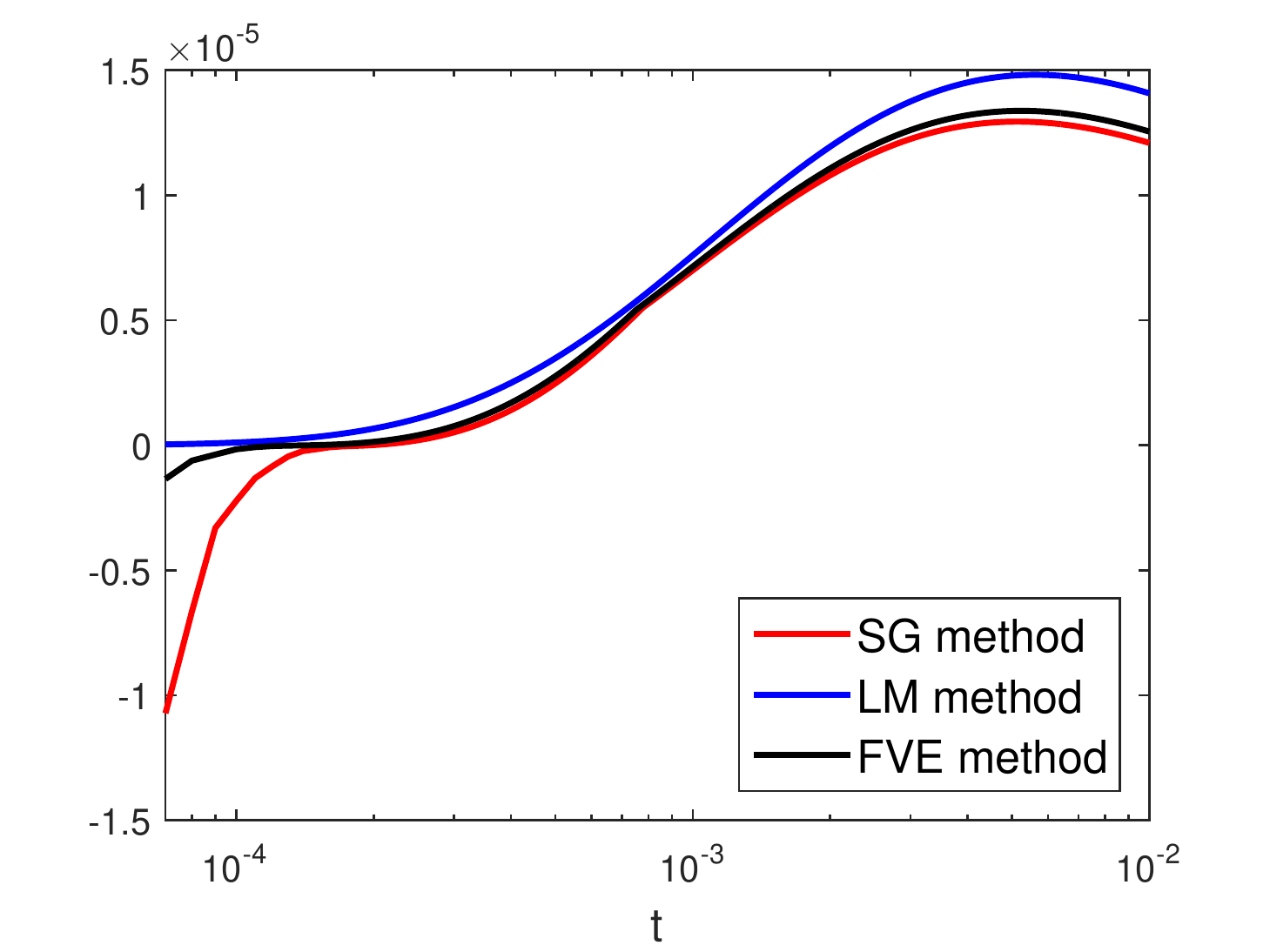}
}
\subfigure[multi-term, $\al_1=0.5$, $\alpha_2=0.2$]{
\includegraphics[trim = .9cm .1cm .1cm .1cm, clip=true,width=5cm]{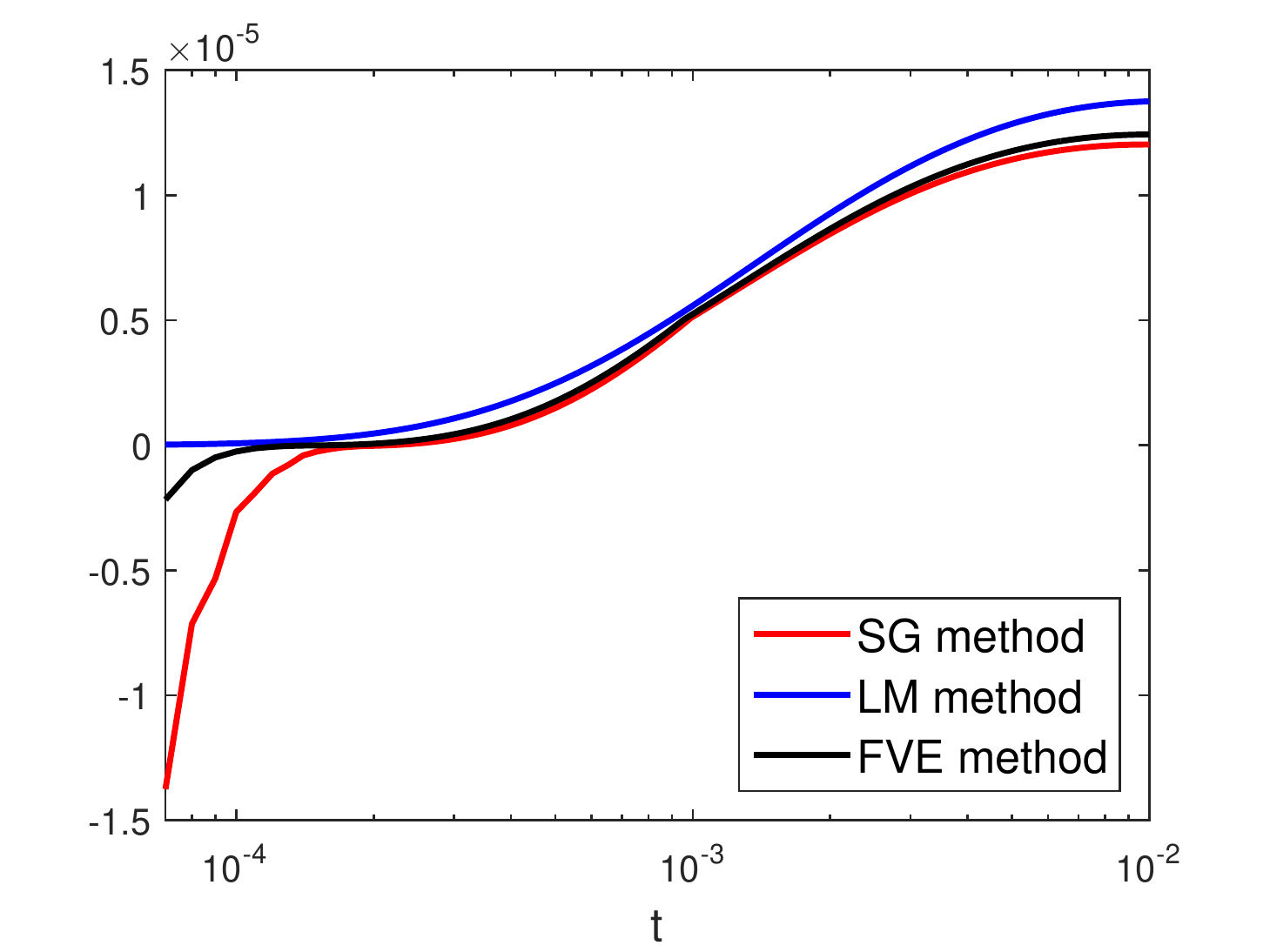}
}
\subfigure[distributed order, $\mu(\al)=e^{\alpha}$]{
\includegraphics[trim = .9cm .1cm .1cm .1cm, clip=true,width=5cm]{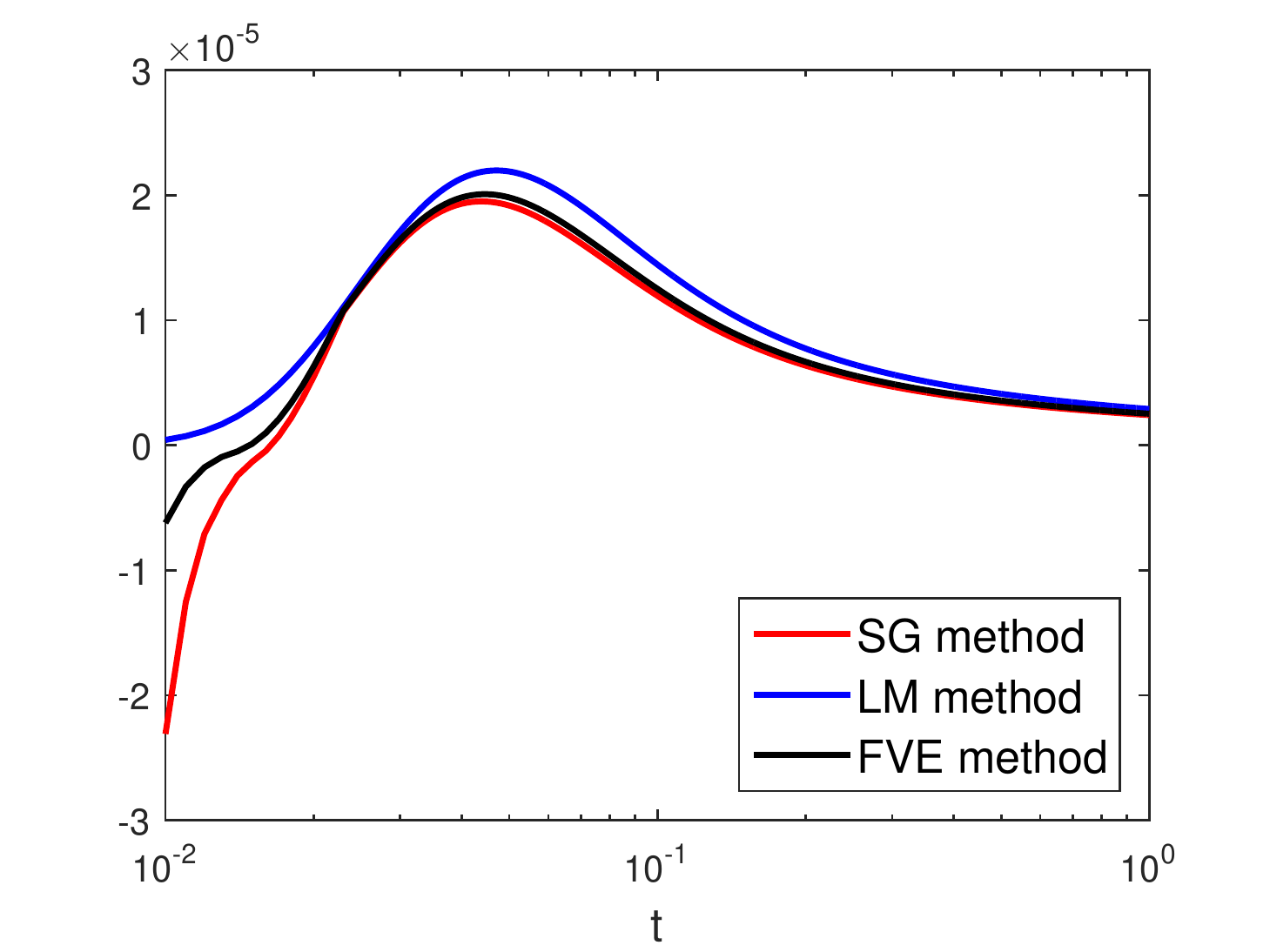}
}
\caption{The evolution of the smallest entry of
$\E(t)$ for the semidiscrete SG, LM and FVE methods
for Example (a), with $h_0=0.100$. }
\label{fig:semi_a}
\end{figure}

\begin{table}[h]
\begin{center}{
\caption{Positivity thresholds for Example (a),
for the semidiscrete and fully discrete SG and FVE  methods.}
\label{tab:semi-fully_a}
\begin{tabular}{|c|c|c|cc|cc|cc|cc|cc|cc|}
\hline
     & & &\multicolumn{4}{c|}{single-term ($\alpha$)}                          & \multicolumn{4}{c|}{multi-term ($\alpha_1,\alpha_2$)}                           & \multicolumn{2}{c|}{distributed ($\mu(\alpha)$)}                    \\ \cline{4-13}
     &$h_0$& $h$& \multicolumn{2}{c|}{$0.5$} & \multicolumn{2}{c|}{$0.75$} & \multicolumn{2}{c|}{$(0.5,0.2)$}
     & \multicolumn{2}{c|}{$(0.75,0.2)$} & \multicolumn{2}{c|}{$e^{\al}$} \\ \cline{4-13}
&   & & $\hat t_0\backslash\hat \tau_0$      & $\tilde t_0\backslash\tilde \tau_0$
&$\hat t_0\backslash\hat \tau_0$  & $\tilde t_0\backslash \tilde \tau_0$ & $\hat t_0\backslash\hat\tau_0$
& $\tilde t_0\backslash \tilde\tau_0$ & $\hat t_0\backslash\hat \tau_0$
 & $\tilde t_0\backslash \tilde\tau_0$ & $\hat t_0\backslash\hat \tau_0$           & $\tilde t_0\backslash \tilde\tau_0$  \\ \hline
&0.100 & 0.141 &1.96e-4 & 1.46e-4  &  7.42e-3  &  6.41e-3&  2.11e-4 & 1.56e-4&   7.56e-3 &   6.51e-3  & 1.64e-2 & 1.48e-2   \\
SD&0.050&0.071& 3.74e-5&  2.74e-5 &  3.54e-3 &  2.98e-3 &3.86e-5 &2.82e-5  & 3.57e-3 &3.00e-3  & 1.03e-2 &  9.12e-3  \\
&0.025 & 0.035 &1.36e-5&  1.29e-5 &  1.44e-3 &  1.19e-3  & 1.41e-5 & 1.16e-5 &1.43e-3  & 1.23e-3 & 5.42e-3& 4.64e-3 \\ \hline
&0.100 &0.141 &2.85e-5   & 1.98e-5  & 9.33e-4 & 7.32e-4 &3.11e-5  & 2.14e-5 & 9.61e-4  &7.50e-4 &  2.01e-3 & 1.63e-3  \\
FD&0.050 &0.071& 1.81e-6  & 1.26e-6  & 1.49e-4 &1.17e-4 &1.88e-6 & 1.30e-6 & 1.50e-4 &1.18e-4 & 4.17e-4  & 3.39e-4  \\
&0.025 & 0.035&1.37e-7  & 8.24e-8  & 2.65e-5&1.89e-5 &1.39e-7 & 8.36e-8 & 2.67e-5&1.90e-5 & 9.80e-5 & 7.42e-5  \\ \hline
\end{tabular}}
\end{center}
\end{table}

{\it Example (b).} We next consider a family of non-Delaunay triangulations $\T_h$
from \cite[Section 5.2]{Chatz:2014}, constructed as follows. Let $M$ be
a positive integer, $h_0=1/(2M)$, and set $x_i=ih_0$,
$i=0,\ldots,2M$, and $y_j=2jh_0$ for $j=0,\ldots,M$. This
divides the domain $\Omega$ into small rectangles
$(x_i,x_{i+1})\times (y_j,y_{j+1})$, and we now  connect
the nodes $(x_i,y_j)$ with $(x_{i+1},y_{j+1})$,
and $(x_{i+1},y_j)$ with $(x_i,y_{j+1})$.
For this triangulation, $h=2h_0$, and all vertical edges are non-Delaunay,
cf., Fig. \ref{fig:meshes}(b). The
stiffness and mass matrices are given
analytically in \cite{Chatz:2014}.

Since  $\T_h$ is non-Delaunay,
the LM solution  matrices $\E(t)$ and $\E_{1,\tau}$ cannot be nonnegative
for small
$t>0$ and $\tau>0$, by Theorems \ref{thm:semi-lm}
and \ref{thm:LM-fully}, respectively.
However, even though the stiffness matrix
$\S$ is not Stieltjes, in our computations
 $\mathcal{S}^{-1}>0$,  and hence by Theorems
 \ref{theorem:SG} and \ref{thm:SG-full},
there exist positivity thresholds for all three methods,
cf. Table \ref{tab:semi-fully-bb}.
For all three subdiffusion models, the positivity thresholds are still  smallest for the LM method  and
largest  for the SG method, and the thresholds decrease with $h$.

\begin{table}[h]
\begin{center}{
\caption{Positivity thresholds for Example (b)
for the semidiscrete and fully discrete SG, LM and FVE  methods.}
\label{tab:semi-fully-bb}
\begin{tabular}{|c|c|c|ccc|ccc|ccc|}
\hline
     & && \multicolumn{3}{c|}{single-term ($\alpha$)}                          & \multicolumn{3}{c|}{multi-term ($\alpha_1,\alpha_2$)}                           & \multicolumn{3}{c|}{distributed ($\mu(\alpha)$)}                    \\ \cline{4-12}
     &$h_0$& $h$&\multicolumn{3}{c|}{$0.5$} & \multicolumn{3}{c|}{$(0.5,0.2)$} & \multicolumn{3}{c|}{$e^{\al}$} \\ \cline{4-12}
& &  & $\hat t_0\backslash\hat \tau_0$ & $\bar t_0\backslash\bar \tau_0$     & $\tilde t_0\backslash\tilde \tau_0$
   &$\hat t_0\backslash\hat \tau_0$  & $\bar t_0\backslash\bar \tau_0$   & $\tilde t_0\backslash \tilde \tau_0$
    &$\hat t_0\backslash\hat\tau_0$  & $\bar t_0\backslash\bar \tau_0$   & $\tilde t_0\backslash \tilde\tau_0$  \\ \hline
&0.100 & {0.200}& 2.59e-4 & 1.17e-4 & 2.26e-4  & 2.99e-4&   1.30e-4 &   2.58e-4  & 1.09e-2 & 7.02e-3 & 1.03e-2   \\ 
SD&0.050  &0.100&  3.97e-5 & 1.15e-5 & 3.14e-5  & 4.13e-5&   1.20e-5 & 3.27e-5  & 7.85e-3 & 3.52e-3 & 7.13e-3    \\ 
&0.025  & 0.050 &1.77e-5 & 1.27e-5 & 1.68e-5  & 1.78e-5&   1.38e-5 & 1.74e-5  & 4.70e-3 & 3.55e-3 & 4.41e-3    \\\hline
&0.100 & 0.200 &6.20e-4 & 2.21e-4 & 5.21e-4  & 7.73e-4&   2.60e-4 &   6.42e-4  & 1.25e-2 & 6.71e-2 & 1.13e-2      \\ 
FD&0.050 & 0.100 &3.88e-5 & 1.38e-5 & 3.26e-5  & 4.26e-5&   1.48e-5 &   3.57e-5  & 2.41e-3 & 1.32e-3 & 2.17e-3    \\ 
&0.025 & 0.050 &2.42e-6 & 8.64e-7 & 2.03e-6  & 2.52e-5&   8.90e-7 &   2.12e-6  & 4.91e-4 & 2.74e-4 & 4.44e-4    \\ \hline  
\end{tabular}}
\end{center}
\end{table}

\bigskip

{\it Examples (c) and (d).}
In these examples unstructured Delaunay triangulations are generated using
the public domain mesh generators  \texttt{Triangle}
\cite{shewchuk1996triangle} and \texttt{DistMesh} \cite{Persson:2004} for
(c) and (d), respectively.
The LM method  preserves nonnegativity
for $t\ge0$, and  the positivity
thresholds for the SG and FVE methods, in
Tables \ref{tab:semi-fully_c} and \ref{tab:semi-fully_d},
exhibit a behavior similar to that for the uniform triangulations
in Example (a). 
For example, in the single-term subdiffusion,
the positivity threshold for the
fully discrete schemes shows a $O(h^{2/\alpha})$
dependence on  $h$.

\begin{table}[h]
\begin{center}
{\caption{Positivity thresholds for Example (c)
for the semidiscrete and fully discrete SG and FVE methods.}
\label{tab:semi-fully_c}
\begin{tabular}{|c|c|cc|cc|cc|cc|cc|cc|}
\hline
    &  & \multicolumn{4}{c|}{single-term ($\alpha$)} & \multicolumn{4}{c|}{multi-term ($\alpha_1,\alpha_2$)} & \multicolumn{2}{c|}{distributed ($\mu(\alpha)$)} \\ \cline{3-12}
    &$h$ & \multicolumn{2}{c|}{$0.5$} & \multicolumn{2}{c|}{$0.75$} & \multicolumn{2}{c|}{$(0.5,0.2)$} &
\multicolumn{2}{c|}{$(0.75,0.2)$} & \multicolumn{2}{c|}{$e^{\al}$} \\ \cline{3-12}
&     & $\hat t_0\backslash\hat \tau_0$           & $\tilde t_0\backslash\tilde \tau_0$         &$\hat t_0\backslash\hat \tau_0$
   & $\tilde t_0\backslash\tilde \tau_0$ & $\hat t_0\backslash\hat \tau_0$           & $\tilde t_0\backslash\tilde \tau_0$ & $\hat t_0\backslash\hat \tau_0$
       & $\tilde t_0\backslash\tilde \tau_0$ & $\hat t_0\backslash\hat \tau_0$           & $\tilde t_0\backslash\tilde \tau_0$  \\ \hline
&0.198 & 1.17e-4   & 8.52e-5  & 5.04e-3 & 4.17e-3 &1.25e-4  &8.72e-5 & 5.13e-3&4.23e-3 &1.17e-2  & 1.02e-2\\ 
SD&0.101 & 2.05e-5  & 1.47e-5  & 2.28e-3 &1.83e-3 &2.12e-5 & 1.51e-5 & 2.29e-3&1.83e-3 &  7.38e-3  & 6.12e-3\\ 
&0.051 & 1.21e-5  & 1.05e-5  & 1.26e-3 &1.09e-3 &1.23e-5 & 1.12e-5 & 1.27e-3 &1.11e-3 & 4.11e-3  & 3.63e-3\\ \hline 
&0.198 & 4.36e-5   &3.03e-5  & 1.24e-3 & 9.72e-4 &4.81e-5  & 3.31e-5 & 1.28e-3&1.00e-3 &2.57e-3 & 2.08e-3\\ 
FD&0.101 & 3.07e-6  & 2.09e-6  & 2.11e-4 &1.64e-4 &3.21e-6 & 2.18e-6 & 2.14e-4&1.65e-4 &5.61e-4  &4.52e-5\\ 
&0.051 & 2.50e-7  & 1.73e-7  & 3.97e-5 &3.10e-5 &2.55e-7 & 1.76e-7 & 3.99e-5&3.12e-5 & 1.38e-4   &1.12e-4 \\ \hline 
\end{tabular}
}\end{center}
\end{table}

\begin{table}[h]
\caption{Positivity thresholds for Example (d)
for the semidiscrete and fully discrete SG and FVE methods.}
\label{tab:semi-fully_d}
\vspace{-.2cm}
\begin{center}
{
\begin{tabular}{|c|c|cc|cc|cc|cc|cc|cc|}
\hline
    &  & \multicolumn{4}{c|}{single-term ($\alpha$)} & \multicolumn{4}{c|}{multi-term ($\alpha_1,\alpha_2$)} & \multicolumn{2}{c|}{distributed ($\mu(\alpha)$)} \\ \cline{3-12}
    &$h$ & \multicolumn{2}{c|}{$0.5$} & \multicolumn{2}{c|}{$0.75$} & \multicolumn{2}{c|}{$(0.5,0.2)$} &
\multicolumn{2}{c|}{$(0.75,0.2)$} & \multicolumn{2}{c|}{$e^{\al}$} \\ \cline{3-12}
&     & $\hat t_0\backslash\hat \tau_0$           & $\tilde t_0\backslash\tilde \tau_0$         &$\hat t_0\backslash\hat \tau_0$
   & $\tilde t_0\backslash\tilde \tau_0$ & $\hat t_0\backslash\hat \tau_0$           & $\tilde t_0\backslash\tilde \tau_0$ & $\hat t_0\backslash\hat \tau_0$
       & $\tilde t_0\backslash\tilde \tau_0$ & $\hat t_0\backslash\hat \tau_0$           & $\tilde t_0\backslash\tilde \tau_0$  \\ \hline
&0.20   & 6.68e-4  & 4.29e-4  & 1.78e-2 & 1.38e-2 &7.38e-4 & 4.67e-4 & 1.83e-2 & 1.41e-2 & 3.66e-2  & 2.99e-2\\ 
SD&0.10 & 9.11e-5  & 6.12e-5  & 7.12e-3 & 5.42e-3 &9.46e-5 & 6.42e-5 & 7.18e-3 & 5.46e-3 & 1.95e-2  & 1.58e-2\\ 
&0.05   & 5.68e-5  & 5.24e-5  & 3.58e-3 & 3.29e-3 &6.00e-5 & 5.53e-5 & 3.59e-3 & 3.34e-3 & 1.02e-2  & 9.36e-3\\ \hline 
&0.20   & 9.39e-5  & 5.95e-5  & 2.07e-3 & 1.53e-3 &1.06e-4 & 6.64e-5 & 2.16e-3 & 1.58e-3 & 4.04e-3  & 3.09e-3\\ 
FD&0.10 & 6.06e-6  & 3.88e-6  & 3.32e-4 & 2.47e-4 &6.40e-6 & 4.07e-6 & 3.38e-4 & 2.50e-4 & 8.25e-4  & 6.40e-4\\ 
&0.05   & 4.03e-7  & 2.50e-7  & 5.45e-5 & 3.96e-5 &4.12e-7 & 2.55e-7 & 5.46e-5 & 3.98e-5 & 1.80e-4  & 1.37e-4 \\ \hline 
\end{tabular}
}\end{center}
\end{table}

{\it Example (e).} We finally consider  a family of
non-Delaunay triangulations $\T_h$ of the unit square,
studied in Dr\u{a}g\u{a}nescu et al.
\cite{Druaguanescu:2005}.
Using the uniform triangulation of $\Om$ in Example (a)  with
$h_0=1/M$, we further subdivide one triangle at the boundary by introducing
three extra nodes, $Q=(1/2+h_0/4,h_0\ep)$, $R=(1/2+3h_0/4,h_0\ep)$
with $\epsilon=10^{-3}$,
 and $P=(h_0/2,0)$,
cf. Fig. \ref{fig:meshes}(e).

Since $\T_h$ is non-Delaunay, by
Theorems \ref{thm:semi-lm} and \ref{thm:LM-fully}
the LM methods do not preserve nonnegativity  for small positive
$t$ and $\tau$.
It was shown in
\cite{Druaguanescu:2005}
that $\calS^{-1} \ngeq 0$, and hence
$ \calH^{-1}\ngeq0$
for the LM methods.
In particular, by Theorem \ref{thm:SG-full},
the fully discrete method cannot
preserve positivity for large $\tau$.
It can be seen that $ \calH^{-3}>0$, i.e., that
$\H^{-1}$ is eventually positive,
which is sufficient to ensure the
existence of a positivity threshold for $\E(t)$
in the case of
the heat equation (see \cite{Chatz:2014}).
In contrast, in the fractional case,
no positivity threshold
appears to exist
for the semidiscrete LM method
for  the single-term model,
which shows a drastic difference between anomalous and
normal diffusion.
 The evolution of the smallest entry of $\E(t)$
for the single-term case with $\al=0.5$ and $e^{- \calH t}$
(corresponding to the heat equation) for the semidiscrete methods
is shown in Fig. \ref{fig:semi_e}.

However, even for this somewhat pathological triangulation,
in our computations the matrix $\calH^{-1}$ turns out to be
positive for the SG and FVE methods, and
hence, by Theorems \ref{theorem:SG} and \ref{thm:SG-full},
both the semidiscrete and
the fully discrete methods preserve nonnegativity for large $t$ and $\tau$.
Table \ref{tab:semi-fully_e} shows positivity thresholds
for the FVE and SG methods, and we note that again
they are smaller for the FVE method than for the SG method and
decrease with $h$.

\begin{figure}[th!]
\subfigure[$\E(t)=e^{-\calH t}$]{
\includegraphics[trim = 1cm .15cm .3cm 0cm, clip=true,width=4.5cm]{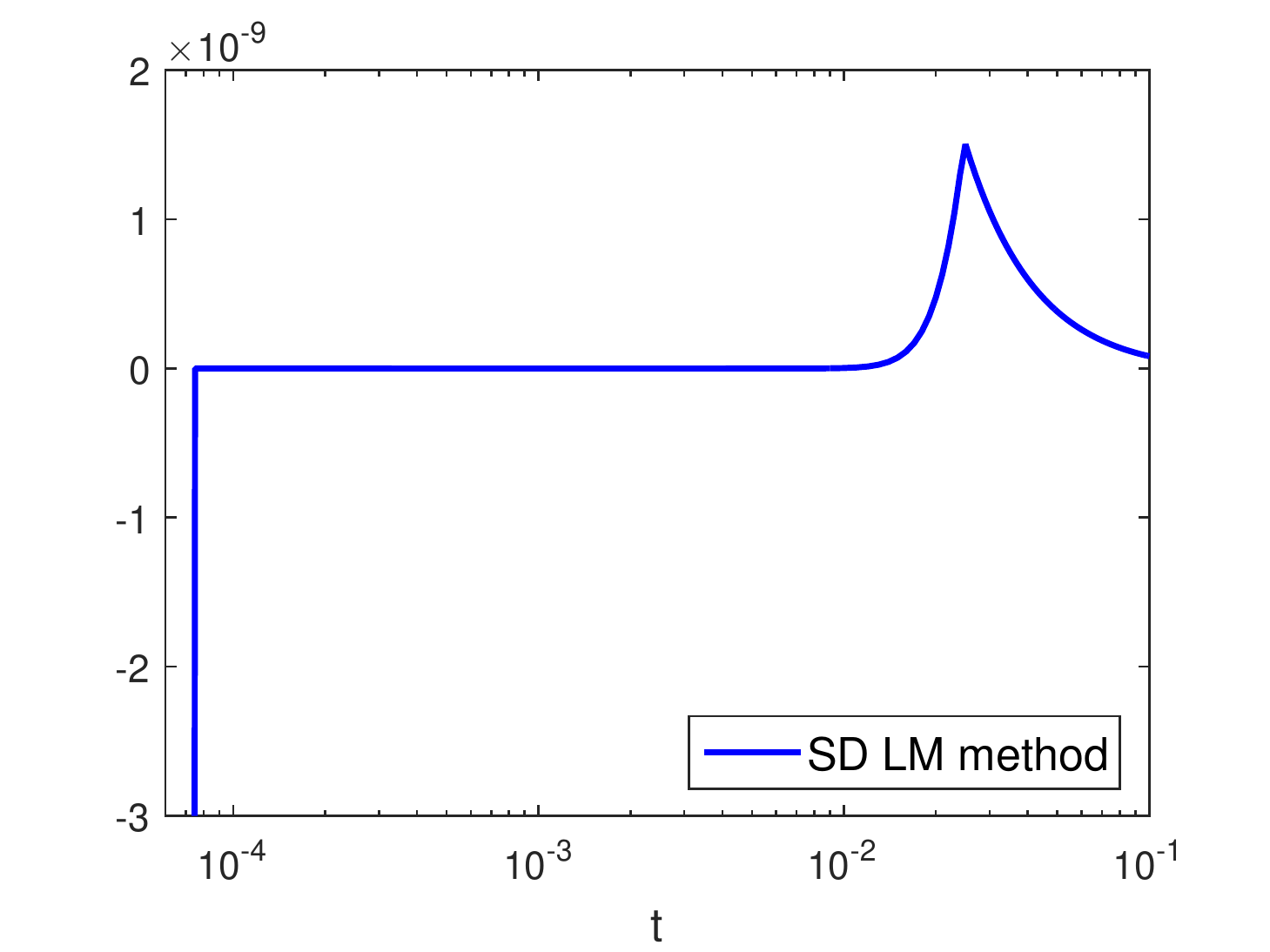}
}
\subfigure[$\E(t)=E_{\al}(- \calH t^\alpha)$, $\al=0.5$]{
\includegraphics[trim = 1.1cm .15cm .3cm 0cm, clip=true,width=4.5cm]{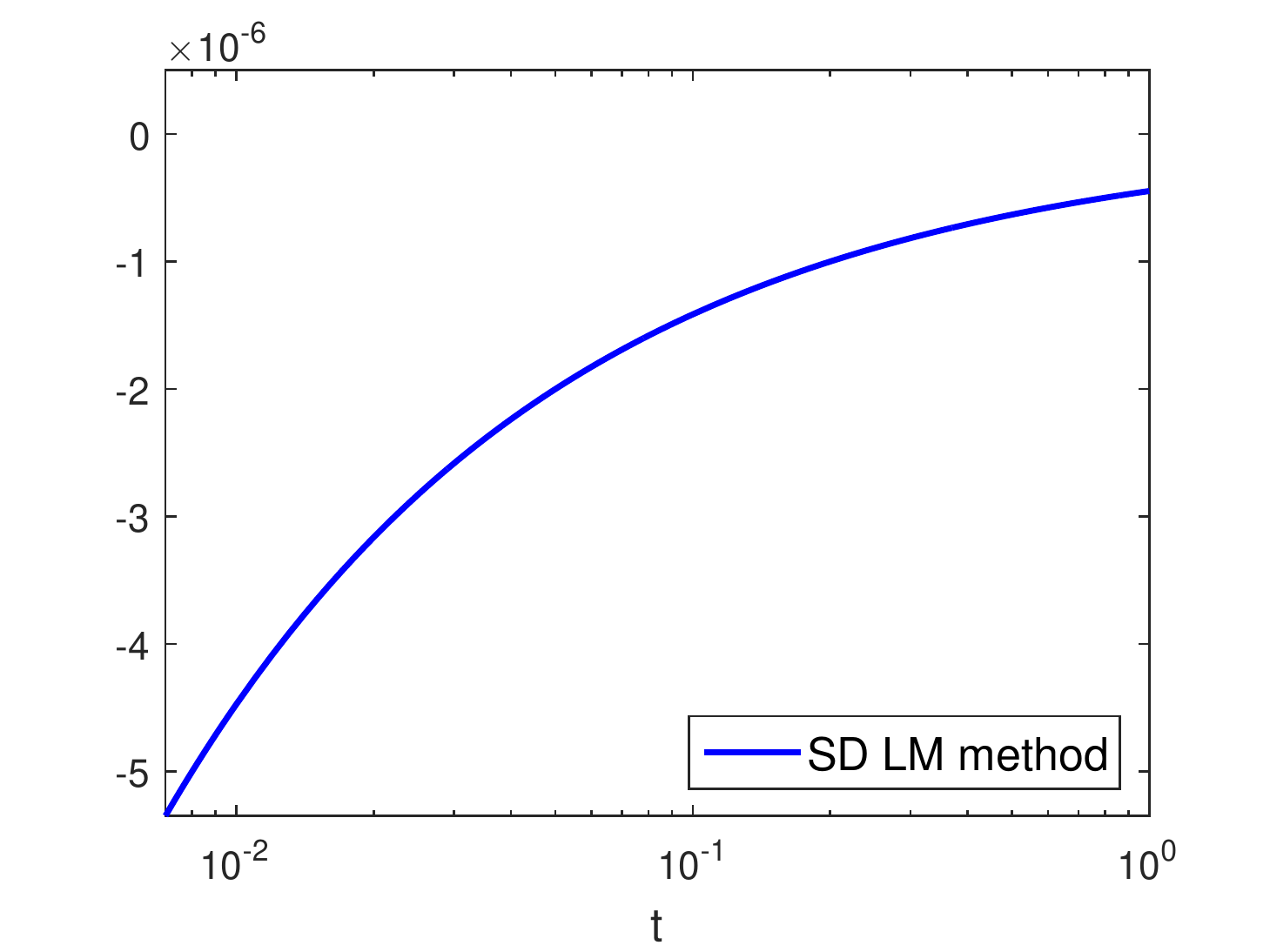}
}
\subfigure[$(\tau^\al I+ \calH)^{-1}$, $\al=0.5$]{
\includegraphics[trim = 1cm .15cm .3cm 0cm, clip=true,width=4.5cm]{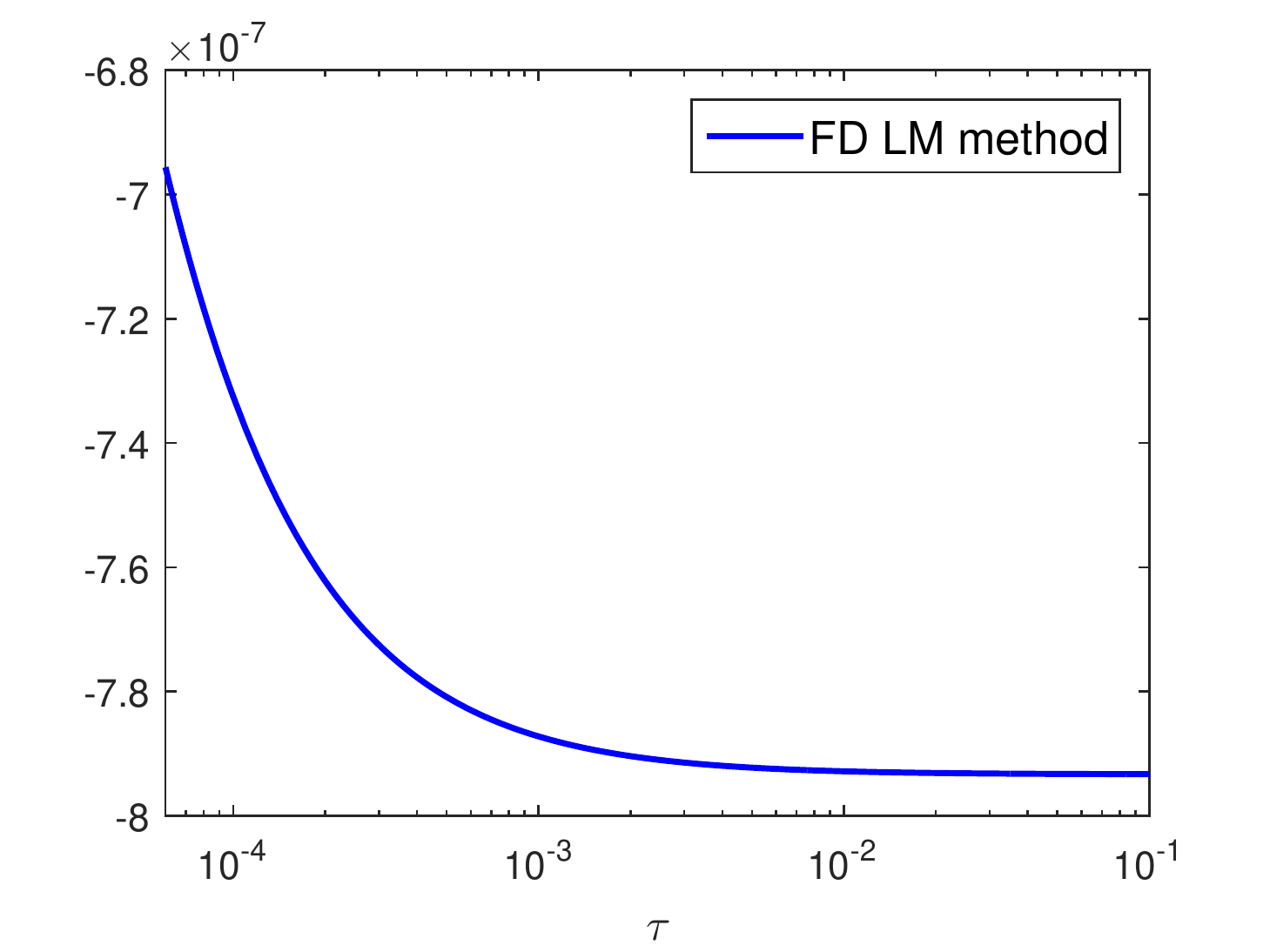}
}
\caption{Evolution of the smallest entry of $\E(t)$ for the
semidiscrete and fully discrete LM methods for Example (e), with  $h=0.141$. }
\label{fig:semi_e}
\end{figure}

\begin{table}[h]
\begin{center}{
\caption{Positivity thresholds for Example (e)
for the semidiscrete and fully discrete SG and FVE methods.}
\label{tab:semi-fully_e}
\begin{tabular}{|c|c|c|cc|cc|cc|}
\hline
   &  & & \multicolumn{2}{c|}{single-term ($\alpha$)}                          & \multicolumn{2}{c|}{multi-term ($\alpha_1,\alpha_2$)}                           & \multicolumn{2}{c|}{distributed $(\mu(\alpha))$}                    \\ \cline{4-9}
&$h_0$ &$h$  & \multicolumn{2}{c|}{$0.5$}  & \multicolumn{2}{c|}{$(0.5,0.2)$}  & \multicolumn{2}{c|}{$e^\al$}                  \\ \cline{4-9}
 &  &  & $\hat t_0\backslash\hat \tau_0$      & $\tilde t_0\backslash\tilde \tau_0$         &$\hat t_0\backslash\hat \tau_0$    & $\tilde t_0\backslash\tilde \tau_0$
   & $\hat t_0\backslash\hat \tau_0$       & $\tilde t_0\backslash\tilde \tau_0$             \\
   \hline
  &   0.100 & 0.141&  1.96e-4    &1.47e-4 &2.11e-4   &1.56e-4  &1.64e-2   &1.49e-2           \\ 
SD  & 0.050 &0.071 &  3.75e-5    &2.75e-5 &3.87e-5   &2.83e-5  &1.04e-2   &9.12e-3           \\ 
  &   0.025 & 0.035 & 1.58e-5    &1.20e-5 &1.66e-5    &1.24e-5  &5.43e-3  &4.72e-3     \\ \hline
  &   0.100 & 0.141 & 2.86e-5 &1.98e-5 &3.11e-5    &2.14e-5  &2.02e-3  &1.63e-3           \\
FD &  0.050 & 0.071 & 1.82e-6 &1.26e-6 &1.89e-6   &1.31e-6  &4.17e-4  &3.40e-4           \\ 
  &   0.025  & 0.035 & 1.36e-7  &8.21e-8 &1.41e-7    &8.31e-8  &9.80e-5   &7.42e-5     \\ \hline
\end{tabular}}
\end{center}
\end{table}

\appendix

\section{Convergence of the time stepping scheme}

In the proof of Theorem \ref{thm:semi-lm} and also
in Appendix B we need the convergence of the
fully discrete solution matrix  to the semidiscrete one
as the time step tends to zero, as  shown in
the following lemma.
\begin{lemma}\label{4conv}
Let $\E(t)$ and $\E_{n,\tau}$ be the semidiscrete and fully
discrete solution matrices, defined by \eqref{eqn:fem-matrix} and \eqref{eqn:BE_SG}, respectively. Then,
with the norm defined by the inner product $\M V\cdot W$, we have
\[
 \| \E(t)-\E_{n,t/n}   \| \le Cn^{-1},\quad \text{for}~~n\ge1.
\]
\end{lemma}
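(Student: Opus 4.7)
The plan is to represent both $\E(t)$ and $\E_{n,\tau}$ as contour integrals on a common Hankel contour and then bound their difference by backward-Euler consistency combined with a uniform resolvent estimate. Since $\M$ and $\S$ are symmetric positive definite, $\H=\M^{-1}\S$ is selfadjoint and positive on $(\mathbb{R}^N,\langle\cdot,\cdot\rangle_\M)$ with $\langle V,W\rangle_\M=\M V\cdot W$, and since $P(z)$ maps the sector $\Si_\theta=\{z\ne 0:|\arg z|<\theta\}$, for some $\theta\in(\pi/2,\pi)$, into a proper subsector of $\mathbb{C}\setminus(-\infty,0]$, we obtain
\[
\|(P(z)\I+\H)^{-1}\|_\M\le c/|P(z)|\quad\text{for}\ z\in\Si_\theta.
\]
Deforming the Bromwich contour in \eqref{2.0} to a Hankel contour $\Gamma$ built from two rays $\arg z=\pm\theta$ joined by a circular arc of radius $1/t$ around the origin, we have $\E(t)=(2\pi\mathrm{i})^{-1}\int_\Gamma e^{zt}\mathcal{J}(z)\,dz$ with $\mathcal{J}(z)=z^{-1}P(z)(P(z)\I+\H)^{-1}$.

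For the fully discrete side, a direct generating-function computation starting from \eqref{eqn:BE_SG} yields
\[
\sum_{n\ge0}\E_{n,\tau}\xi^n=\frac{\I}{1-\xi}-\frac{\xi}{1-\xi}\bigl(P(\tfrac{1-\xi}{\tau})\I+\H\bigr)^{-1}\H,
\]
and then Cauchy's formula combined with the substitution $\xi=e^{-z\tau}$, followed by a contour deformation to the truncation $\Gamma^\tau=\{z\in\Gamma:|\Im z|\le\pi/\tau\}$, produces
\[
\E_{n,\tau}=\frac{1}{2\pi\mathrm{i}}\int_{\Gamma^\tau}e^{zt_n}\mathcal{J}_\tau(z)\,dz,\quad \mathcal{J}_\tau(z)=\mu_\tau(z)P(\delta_\tau(z))\bigl(P(\delta_\tau(z))\I+\H\bigr)^{-1},
\]
with $\delta_\tau(z)=(1-e^{-z\tau})/\tau$ and $\mu_\tau(z)=\tau e^{z\tau}/(1-e^{-z\tau})$. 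The deformation is admissible because $\mathcal{J}_\tau$ is analytic on the relevant domain, by the sector-preserving property of $P$ and the positivity of $\H$. Splitting
\[
\E(t_n)-\E_{n,\tau}=\frac{1}{2\pi\mathrm{i}}\int_{\Gamma^\tau}e^{zt_n}(\mathcal{J}-\mathcal{J}_\tau)\,dz+\frac{1}{2\pi\mathrm{i}}\int_{\Gamma\setminus\Gamma^\tau}e^{zt_n}\mathcal{J}\,dz,
\]
the backward-Euler consistency bounds $|\delta_\tau(z)-z|\le C\tau|z|^2$ and $|\mu_\tau(z)-z^{-1}|\le C\tau$ on $\Gamma^\tau$, combined with the resolvent estimate and a mean-value bound on $P(\delta_\tau(z))-P(z)$, give $\|\mathcal{J}(z)-\mathcal{J}_\tau(z)\|_\M\le C\tau$ uniformly on $\Gamma^\tau$. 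Integrating against $|e^{zt_n}|$, using $\int_\Gamma|e^{zt}|\,|dz|=O(1/t)$ from the Hankel geometry, and noting $\tau=t/n$, bounds the first integral by $Cn^{-1}$; the second integral is exponentially small in $1/\tau$.

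The main obstacle is verifying the uniform consistency bound $\|\mathcal{J}-\mathcal{J}_\tau\|_\M\le C\tau$ across the entire truncated contour $\Gamma^\tau$. In case I this reduces to power-type manipulations using the asymptotics $P(z)\sim z^{\alpha_1}$ at infinity and $P(z)\sim b_m z^{\alpha_m}$ at the origin, while in case II the slow logarithmic decay $P(z)\sim\mu(0)/\log(z^{-1})$ as $z\to0$ forces a careful expansion of $P(\delta_\tau(z))-P(z)$ near the origin, exploiting the H\"older continuity of $\mu$ from Section \ref{sec:prelim} so that $C$ stays independent of $\tau$ and $n$.
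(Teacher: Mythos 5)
Your proposal is correct in outline but follows a genuinely different route from the paper's at the decisive step. Both arguments reduce the matrix estimate to a scalar/resolvent bound (you via selfadjointness of $\H$ in the $\M$-inner product, the paper via the eigenvector expansion and Parseval's formula), and both represent the discrete solution through its generating function; the difference is where the error is placed. You substitute $\xi=e^{-z\tau}$, so that $\E_{n,\tau}$ becomes an integral of a perturbed kernel $\mathcal{J}_\tau(z)$ against the exact propagator $e^{zt_n}$, and the entire error sits in the symbol consistency $\|\mathcal{J}(z)-\mathcal{J}_\tau(z)\|\le C\tau$. The paper instead substitutes $z=(1-\xi)/\tau$, which for backward Euler is an exact affine change of variables, yielding $r_{n,\tau}(\la)=(2\pi\mathrm{i})^{-1}\int J_\la(z)(1-z\tau)^{-n}\,dz$ with the \emph{unperturbed} kernel $J_\la$; the entire error then sits in the propagator difference $e^{n\zeta}-(1-\zeta)^{-n}$, which is handled by the elementary telescoping bound $Cn|\zeta|^2e^{-cn|\zeta|}$. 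The paper's route therefore needs $P$ only through the single sectorial estimate $|J_\la(z)|\le C/|z|$ and never compares $P(\delta_\tau(z))$ with $P(z)$ --- precisely the step you identify as the main obstacle. Your route is more standard and more general (it is the Lubich convolution-quadrature analysis and would extend to higher-order generating polynomials, for which the affine substitution is unavailable), but you do still owe the uniform consistency bound, which you only sketch. It is, however, less delicate than you suggest and requires no case split: for any positive measure $\nu$ and $w$ in a sector $|\arg w|\le\theta<\pi$, all the values $w^\al$ lie in that same sector and hence cannot cancel, so $\int_0^1|w|^\al\,d\nu(\al)\le C(\theta)|P(w)|$ and $|P'(w)|\le|w|^{-1}\int_0^1|w|^\al\,d\nu(\al)\le C(\theta)|w|^{-1}|P(w)|$; combined with $|\delta_\tau(z)-z|\le C\tau|z|^2$ and $|\delta_\tau(z)|\asymp|z|$ on $\Gamma^\tau$ this gives $|P(\delta_\tau(z))-P(z)|\le C\tau|z|\,|P(z)|$ uniformly in both case I and case II, with no need for the H\"older continuity of $\mu$ or a logarithmic expansion near the origin. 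With that observation your argument closes and yields the same $O(n^{-1})$ bound.
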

\begin{proof}
By  eigenvector expansion, cf. \eqref{3.E} and \eqref{4.E},
		and Parseval's formula we have
\[
 \| \E(t)-\E_{n,t/n} \| = \max_j |u_{\la_j}-r_{n,t/n}(\la_j)|,
\]
where $r_{n,\tau}(\la),\ n\ge1,$ with $\tau=t/n$, are the rational functions for which
$\E_{n,\tau}=r_{n,\tau}(\H)$, and
the $\la_j$ are the eigenvalues of $\H$.
So it remains to show the inequality
\[
  |u_{\la}(t)-r_{n,t/n}(\la)|\le Cn^{-1},
 \for~~\la>0,~~n\ge1.
\]
We recall from \eqref{22} that
\begin{equation}\label{eqn:ula}
    u_{\la}(t) = \frac{1}{2\pi\mathrm{i}}\int_{\Gamma_\sigma}J_\la(z) e^{zt}\,dz
    \with J_\la(z)=z^{-1}\,P(z)(P(z)+\la)^{-1}.
\end{equation}
To determine a corresponding representation of $r_{n,\tau}(\la)$ we
consider
the fully discrete solution $u^n=r_{n,\tau}(\la),\ n\ge0,$
of the scalar problem \eqref{scalder} for $u_\la(t)$,
which is given by
\begin{equation}\label{eqn:disc}
  \sum_{j=0}^n \omega_{n-j}u^j  + \la u^n =
  \sum_{j=0}^n \omega_{n-j} \for n\ge1,\with\ u^0=1.
\end{equation}
Multiplying both sides by $\xi^n$ and summing over
$n$ from $1$ to $\infty$, we obtain
\begin{equation*}
\sum_{n=1}^\infty \xi^n\sum_{j=0}^n \omega_{n-j}u^j
 + \la \sum_{n=1}^\infty u^n\xi^{n}
= \sum_{n=1}^\infty\xi^n\sum_{j=0}^n\omega_{n-j}.
\end{equation*}
Upon adding $\omega_0$ to both sides, the summation in the double sums may
start at $n=0$, so that they are both discrete convolutions, and hence,
with $\widetilde u(\xi)$ the generating function of the $u^j$ (i.e., $\widetilde{u}(\xi)=\sum_{j=0}^\infty u^j\xi^j$),
$\delta_\tau(\xi)=(1-\xi)/\tau$ that of the backward Euler method,  and $(1-\xi)^{-1}$ that
 of the sequence $(1,1,1,\dots)$, we obtain
\begin{equation*}
P(\delta_\tau(\xi))\widetilde u(\xi)
+\la (\widetilde u(\xi)-1)=P(\delta_\tau(\xi))(1-\xi)^{-1},
\end{equation*}
or
\begin{equation*}
(P(\delta_\tau(\xi))+\la)(\widetilde u
(\xi)-1)=P(\delta_\tau(\xi))((1-\xi)^{-1}-1).
\end{equation*}
Consequently, we have
\begin{equation*}
    \widetilde u(\xi) -1 = (P(\delta_\tau(\xi))+\lambda)^{-1}
	 P(\delta_\tau(\xi))
	 \xi(1-\xi)^{-1}
	  = \tau^{-1}\xi J_\lambda(\delta_\tau(\xi)),
\end{equation*}
Hence, the $n$th term in $\widetilde u(\xi)$
is given by
\begin{equation*}
  u^n=\frac{1}{2\pi\mathrm{i}}\int_{|\xi|=\epsilon}\tau^{-1}
  J_\lambda(\delta_\tau(\xi))\xi^{-n}\,d\xi, \for n\ge1,
\end{equation*}
and changing variables with $z=(1-\xi)/\tau$ yields, with small $\epsilon>0$,
\begin{equation}\label{4.cf}
r_{n,\tau}(\lambda)=u^n=\frac{1}{2\pi\mathrm{i}}\int_{|1-z\tau|=\epsilon}
J_\lambda(z)(1-z\tau)^{-n}\,d\xi.
\end{equation}
Since $J_\la(\xi)$ is analytic in the complement of the
negative real axis we may deform the contour of integration into
$\Gamma_\sigma$ and, with the change of variables
 $z=\zeta/\tau,\ \tau=t/n$,
we obtain the error representation
\begin{equation*}
    u_{\la}(t)-r_{n,\tau}(\la)=
    \frac{1}{2\pi\mathrm{i}}\int_{\Gamma_\sigma} J_\la(z)\left(e^{zt}-(1-z\tau)^{-n}\right)\,dz
   = \frac{1}{2\tau\pi\mathrm{i}}\int_{\Gamma_\sigma} J_\la(\zeta/\tau)
   \left(e^{n\zeta}-(1-\zeta)^{-n}\right)\,d\zeta.
\end{equation*}
We now  deform $\Gamma_\sigma$ into
	$\Gamma=\Gamma_{0}\cup\Gamma_{1}$, with
$\Gamma_0=\{ e^{\mathrm{i}\fy}/(2n): \fy\in[-\theta,\theta]\}$
a circular arc
and
$\Gamma_1=\{ \rho e^{\pm\mathrm{i}\theta}:\rho\ge 1/(2n)\}$,
a pair of rays in the left half-plane,
where $\theta\in(\pi/2,\pi)$. On  $\Gamma_0$,
we have $|\zeta|=1/(2n)$ and $|(1-\zeta)^{-1}|\leq e^{c|\zeta|}$
and hence
\begin{equation*}
  |e^{n\zeta}-(1-\zeta)^{-n}|  =
	\bigg|\left(e^{\zeta}-(1-\zeta)^{-1}\right)
	\sum_{j=0}^{n-1}(1-\zeta)^{-j}e^{(n-1-j)\zeta} \bigg|
	\le C|\zeta|^2\,n\,e^{cn|\zeta|}\le Cn^{-1}.
\end{equation*}
Next, we write   $\Gamma_1=\Gamma_R\cup\Gamma^R$ where
$\Gamma_R=\{ \rho e^{\pm\mathrm{i}\theta}:\rho\in[1/(2n),R]  \}$
and $\Gamma^R=\{ \rho e^{\pm\mathrm{i}\theta}:\rho\ge R  \}$.
With $c>0$ and  $R$ large enough such that
$|(1-\zeta)^{-1}| \le e^{-c}$
on $\Gamma^R$, using also
$|(1-\zeta)^{-1}| \le |\zeta|^{-1}$ on  $\Gamma_1$,
we have
\begin{equation*}\label{eqn:GR1}
  |e^{n\zeta}-(1-\zeta)^{-n}|  \le |e^{n\zeta}|+|1-\zeta|^{-n}
  \le C|\zeta|^{-1} e^{-cn},
\for \zeta\in \Gamma^R.
\end{equation*}
Further, with $R>0$ given, we have
$|(1-\zeta)^{-1}|\le e^{-c|\zeta|}$
for $\zeta\in\Gamma_R$
and $c>0$ small enough,
and  hence
\begin{equation*}
  |e^{n\zeta}-(1-\zeta)^{-n}|  =
	\big|\left(e^{\zeta}-(1-\zeta)^{-1}\right)
	\sum_{j=0}^{n-1}(1-\zeta)^{-j}e^{(n-1-j)\zeta} \big|
	\le C|\zeta|^2\,n\,e^{-cn|\zeta|},
	\for \zeta\in\Gamma_R.
\end{equation*}
We also note that $|J_\la(z)|\le C/|z|$
for $z\in\Sigma_\theta=\{z:\, z\neq0,\, |\arg z|\le\theta\}$ with $\theta<\pi$.
In fact, if $z\in \Sigma_\theta$,
	with $\Im z\ge0$, say, then the same holds
for $z^\al$ if $\al\in[0,1]$ and
$w=P(z)=\int_0^1z^\al\,d\nu(\al)$, and it
hence suffices to show $|w/(w+\lambda)|\le C$ for $w\in \Sigma_{\theta}$.
For $\mathrm{Re}\, w\ge0$ we have $|w/(w+\lambda)|\le1$,
and for $\mathrm{Re}\, w<0$,
$|w/(w+\lambda)|\le |w|/|\Im w|\le1/|\cos\theta|$.
Together these inequalities yield
\begin{equation*}
\begin{split}
   | u_{\la}(t)-r_{n,t/n}(\la)|&\le C\Big(n^{-1}
	\int_{\Gamma_0}|\zeta|^{-1} \,|d\zeta|
   +  n\int_{\Gamma_R}|\zeta|e^{-cn|\zeta|} \,|d\zeta|
   +e^{-cn}\int_{\Gamma^R} |\zeta|^{-2} \,|d\zeta|\Big)
   \\
  & \le C\Big(n^{-1}\int_{-\theta}^\theta \,d\psi
   +  \int_{1/2n}^R nr e^{-cnr} \, dr
   +e^{-cn}\int_R^\infty r^{-2}\,dr\Big)\le Cn^{-1}.
\end{split}
\end{equation*}
This completes the proof of the lemma.
\end{proof}

We observe from \eqref{4.cf} that by
Cauchy's integral formula,
$ r_{n,\tau}(\la) = (-1)^{n-1} \tau^{-n}J_\la^{(n-1)}(1/\tau) $.

\section{Maximum-norm contractivity of the lumped mass method}
The maximum principle for
\eqref{eqn:fde} ensures the maximum-norm contractivity, i.e.,
\begin{equation}
	\label{A1}
  \| E(t)v \|_{L^\infty(\Omega)}\le
  \| v \|_{L^\infty(\Omega)}\quad \forall t \ge 0.
\end{equation}
It is natural to ask if this property remains valid for discrete methods,
and in \cite{ThomeeWahlbin:2008,SchatzThomeeWahlbin:2010}
this problem was discussed for the SG and LM
methods for the heat equation.
It turned out that, in contrast to the continuous case,
the discrete analogue of \eqref{A1} is not equivalent
to the preservation of nonnegativity.
In this appendix, we discuss  maximum-norm contractivity for the LM method.
In this case the desired property holds when the stiffness matrix $\S$ is
diagonally dominant.

\medskip

We first show a contraction property for the fully discrete method,
and write $\|V\|_\infty=\max_i|V_i|.$
\begin{theorem}\label{thm:LM-contraction-discrete}
The solution matrix $\E_{n,\tau}$ of the fully discrete LM method is contractive in the maximum-norm
if  the matrix $\calS$ is diagonally dominant. If $\E_{1,\tau}$ is contractive in the maximum-norm,
then $\S$ is diagonally dominant.
\end{theorem}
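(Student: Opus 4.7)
My plan is to prove the two directions separately, using induction for the forward implication and an asymptotic expansion at small $\tau$ for the converse.

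For the forward direction, I would argue by induction on $n$ that $\|U^n\|_\infty \le \|V\|_\infty$. The key is to multiply the scheme $Q_n(U)+\calH U^n=Q_n(1)V$ by $\calM$ (diagonal and positive for LM) to obtain
\begin{equation*}
(\omega_0\calM+\calS)U^n = \calM\Big(S_n V + \sum_{j=0}^{n-1}(-\omega_{n-j})U^j\Big), \quad S_n=\sum_{k=0}^n\omega_k.
\end{equation*}
By Lemma~\ref{lem:quad1prop}, $S_n>0$ and $-\omega_{n-j}>0$ for $1\le n-j\le n$, and reindexing shows these coefficients sum exactly to $\omega_0$. Pick an index $i^*$ where $|U^n_{i^*}|=\|U^n\|_\infty$, apply the triangle inequality in row $i^*$, and use the induction hypothesis on the right-hand side to get
\begin{equation*}
\Big(a_{i^*i^*}-\sum_{j\ne i^*}|a_{i^*j}|\Big)\|U^n\|_\infty \le m_{i^*i^*}\omega_0\|V\|_\infty,
\end{equation*}
where $a_{ij}$ denotes entries of $\omega_0\calM+\calS$. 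Since $|a_{ij}|=|s_{ij}|$ off the diagonal, diagonal dominance of $\calS$ combined with positivity of $\omega_0 m_{i^*i^*}$ bounds the bracket from below by $\omega_0 m_{i^*i^*}$, and the induction step closes.

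For the converse, I would let $\tau\to 0$, so that $\omega_0=P(\tau^{-1})\to\infty$, and expand via Neumann series
\begin{equation*}
\E_{1,\tau}=\omega_0(\omega_0\calM+\calS)^{-1}\calM=\calI-\omega_0^{-1}\calH+O(\omega_0^{-2}).
\end{equation*}
For $\omega_0$ large enough the diagonal entry $1-\omega_0^{-1}s_{ii}/m_{ii}$ is positive, so the $i$th row sum of $|\E_{1,\tau}|$ equals $1-\omega_0^{-1}m_{ii}^{-1}(s_{ii}-\sum_{j\ne i}|s_{ij}|)+O(\omega_0^{-2})$. The assumption that this is bounded by $1$ for all such small $\omega_0^{-1}$, combined with the fact that the $O(\omega_0^{-2})$ term is dominated as $\omega_0^{-1}\to 0^+$, forces $s_{ii}-\sum_{j\ne i}|s_{ij}|\ge 0$ for each $i$, i.e.\ diagonal dominance of $\calS$.

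The main obstacle I expect is the bookkeeping in the forward direction: correctly splitting $Q_n(1)V$ to expose that the right-hand side is essentially a convex combination with total weight $\omega_0$ per row, so that the bound from the induction hypothesis matches exactly the lower bound from diagonal dominance (rather than being off by a problematic factor involving $\sum|\omega_k|$). Once this balancing is set up, both directions reduce to one-line consequences of the sign structure of $\{\omega_j\}$ from Lemma~\ref{lem:quad1prop} and the diagonal-dominance identity applied to $\omega_0\calM+\calS$.
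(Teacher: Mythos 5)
Your proposal is correct and follows essentially the same route as the paper's proof: the forward direction is the same induction driven by the sign structure and the total weight $\omega_0$ of the quadrature coefficients from Lemma \ref{lem:quad1prop}, combined with a maximal-row/diagonal-dominance estimate (the paper packages that estimate once as $\|(\calI+\omega_0^{-1}\calH)^{-1}\|_\infty\le 1$ and then invokes submultiplicativity, rather than rerunning it inside each induction step on $\omega_0\calM+\calS$), and the converse is the same small-$\tau$ expansion of $\E_{1,\tau}$ forcing row diagonal dominance of $\calH$ and hence of $\calS=\calM\calH$.
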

\begin{proof}
Assume that $\calS$ is diagonally dominant.
Since $\calM$ is positive and diagonal,
 $\calH=\calM^{-1}\calS$
is row diagonally dominant.
Let
 $V  \in \mathbb{R}^N$,
set $U^1=\E_{1,\tau}V=
(\calI+\omega_0^{-1}\calH)^{-1}V$, and let
$|U^1_j|=\|U^1\|_\infty$. Then
\begin{equation*}
  (1+\omega_0^{-1}h_{jj})|U^1_j| =
  |V_j-\omega_0^{-1}\sum_{l\neq j} h_{jl}U^1_l|
  \leq \|V\|_\infty + \omega_0^{-1}{h}_{jj}\|U^1\|_\infty,
\end{equation*}
and hence
$\|U^1\|_\infty\leq \|V\|_\infty$, i.e.,
$\|(\calI+\omega_0^{-1}\calH)^{-1}\|_\infty \leq 1$.

We now prove that  $ \E_{n,\tau}$ is a contraction, by showing
by induction that,
for all $V  \in \mathbb{R}^N$, $
\| \E_{n,\tau} V \|_{\infty} = \|V^n\|_\infty \le \| V \|_{\infty}$.
To this end,
assume that this holds
for all $j\le  n-1$.
Using  Lemma \ref{lem:quad1prop}, we then find
\begin{equation*}
  \begin{aligned}
 \|  \sum_{j=0}^{n-1}\omega_{j} \ V -
 \sum_{j=1}^{n-1} \omega_{n-j} U^{j}\|_\infty
  \leq
    \sum_{j=0}^{n-1} \omega_j
\    \|V\|_\infty + \sum_{j=1}^{n-1}(-\omega_j)
\|V\|_\infty= \omega_0\|V\|_\infty,
  \end{aligned}
\end{equation*}
and hence,
cf. \eqref{eqn:BE_SG}, we get
\begin{equation*}
\begin{split}
   \| U^n   \|_\infty =  
 \Big | \hspace{-0.5mm}\Big | (\omega_0\calI+  \calH)^{-1}
 \Big (\sum_{j=0}^{n-1}\omega_{j}\ V -\sum_{j=1}^{n-1}
 \omega_{n-j} U^{j}\Big) \Big | \hspace{-0.5mm}\Big | _\infty
    \le \| (\calI+\omega_0^{-1} \calH)^{-1}  \|_\infty
    \| V  \|_{\infty}\le \|V\|_\infty,
\end{split}
\end{equation*}
which shows our claim.

Now,
suppose $\E_{1,\tau}$ is a contraction. In view of \eqref{eqn:firststep},
we have
\begin{equation*}
\|\E_{1,\tau}\|_\infty =\max_i\sum_j|(\E_{1,\tau})_{ij}|
=\max_i\Big(1-\wt\beta_0(\tau)\big(h_{ii}-\sum_{j\ne i}|h_{ij}|\big)
+o(\wt\beta_0(\tau))\Big),\as \tau\to0.
\end{equation*}
If $\|\E_{1,\tau}\|_\infty\le1$,  by taking $\tau$ small we find
$\sum_{j\neq i}| h_{ij}|\leq h_{ii}$ for
$i=1,\ldots,N$. Hence $ \calH$ is row diagonally
dominant.
Since $ \calM$ is a positive  diagonal matrix,
$\calS=\calM \calH$ is also row diagonally
dominant and hence, since it is symmetric, diagonally dominant.
\end{proof}

The next result gives the contractivity of the semidiscrete LM method.
\begin{theorem}\label{thm:LM-contraction}
The semidiscrete LM solution matrix $\E(t)$ is contractive in the maximum
norm if  $\calS$ is diagonally dominant. If $\E(t)$ is contractive
for small $t$ then $\S$ is diagonally dominant.
\end{theorem}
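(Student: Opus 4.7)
The plan is to mirror the structure of Theorem \ref{thm:LM-contraction-discrete}, leveraging the fully discrete result together with the convergence $\E_{n,t/n}\to\E(t)$ from Lemma \ref{4conv}. For the sufficiency direction, I would argue as follows: assuming $\calS$ is diagonally dominant, Theorem \ref{thm:LM-contraction-discrete} gives $\|\E_{n,\tau}\|_\infty\le 1$ for all $\tau>0$ and all $n\ge1$. Specializing to $\tau=t/n$ with $t>0$ fixed and letting $n\to\infty$, Lemma \ref{4conv} yields $\E_{n,t/n}\to\E(t)$ in the $\calM$-weighted norm, hence entrywise (all norms on $\mathbb R^{N\times N}$ are equivalent). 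Since the maximum-norm is continuous, the bound passes to the limit and gives $\|\E(t)\|_\infty\le 1$ for every $t>0$, with the case $t=0$ being trivial since $\E(0)=\calI$.

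For the necessity direction, I would use the short-time expansion
\begin{equation*}
  \E(t)=\calI-\beta_0(t)\calH\bigl(1+o(1)\bigr)\as t\to 0,
\end{equation*}
which is \eqref{3b0}. Since $\beta_0(t)>0$ and $h_{ii}>0$, for sufficiently small $t$ the diagonal entries satisfy $(\E(t))_{ii}=1-\beta_0(t)h_{ii}+o(\beta_0(t))>0$, while the off-diagonal entries are $(\E(t))_{ij}=-\beta_0(t)h_{ij}+o(\beta_0(t))$, whose absolute values are $\beta_0(t)|h_{ij}|+o(\beta_0(t))$. Summing the $i$th row then yields
\begin{equation*}
  \|\E(t)\|_\infty=\max_i\Bigl(1-\beta_0(t)\bigl(h_{ii}-\sum_{j\neq i}|h_{ij}|\bigr)+o(\beta_0(t))\Bigr)\as t\to0.
\end{equation*}
The hypothesis $\|\E(t)\|_\infty\le 1$ for small $t$ therefore forces $h_{ii}\ge\sum_{j\ne i}|h_{ij}|$ for every $i$, i.e., $\calH$ is row diagonally dominant. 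Since $\calM$ is a positive diagonal matrix, $\calS=\calM\calH$ inherits row diagonal dominance, and symmetry upgrades this to diagonal dominance of $\calS$.

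The main step requiring care is verifying that the asymptotic relation \eqref{3b0} can be applied entrywise with uniform $o(\beta_0(t))$ error, so that the signs of the off-diagonal entries of $\E(t)-\calI$ are indeed determined by $-\beta_0(t)h_{ij}$ for $t$ sufficiently small; this is the same observation already used in the proof of Theorem \ref{thm:failSG} and in Lemma \ref{lem:cpl_mt}. Once that point is granted, both directions reduce to routine estimates, and the result follows by combining the limit argument with the asymptotic expansion at $t\to 0$.
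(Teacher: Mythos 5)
Your proposal is correct and follows essentially the same route as the paper: sufficiency by passing the fully discrete contractivity of Theorem \ref{thm:LM-contraction-discrete} to the limit via Lemma \ref{4conv}, and necessity by reading off the row sums from the short-time expansion \eqref{3b0} to get row diagonal dominance of $\calH$, hence of $\calS=\calM\calH$. The extra care you take about entrywise convergence and the uniformity of the $o(\beta_0(t))$ term is a reasonable elaboration of points the paper leaves implicit.
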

\begin{proof}
By Theorem \ref{thm:LM-contraction-discrete}, $\E_{n,\tau}$
is a contraction in the maximum-norm, and, by Lemma \ref{4conv},
converges
 to the semidiscrete solution matrix
in the sense that $\E(t)=\lim_{n\to\infty}
\E_{n,t/n}$.  Hence also
$\E(t)$ is a contraction.
Conversely, if $\E(t)$ is  a contraction  in
$\|\cdot\|_{\infty}$ for small $t$, then by
\eqref{3b0} we have
\begin{equation*}
 \|\E(t)\|_\infty
=\max_i\Big(1-\beta_0(t)\big(h_{ii}-\sum_{j\ne i}|h_{ij}|\big)
+o(\beta_0(t))\Big)\as t\to0.
\end{equation*}
The rest of the proof is then identical to that of
Theorem \ref{thm:LM-contraction-discrete}.
\end{proof}

\section*{Acknowledgements}
The work of the first author (B. Jin) is partly supported by
UK Engineering and Physical
Sciences Research Council grant EP/M025160/1.

 \medskip

 \noindent
Department of Computer Science \\
University College London\\
Gower Street, London WC1E 6BT, UK  \\
  e-mail: (bangti.jin@gmail.com,b.jin@ucl.ac.uk)\\

 \noindent 
Department of Mathematics\\
Texas A\&M University\\
College Station, TX 77843-3368, USA \\
e-mail: lazarov@math.tamu.edu\\

\noindent Mathematical Sciences\\
Chalmers University of Technology and the
University of Gothenburg\\
 SE-412 96 G\"oteborg, Sweden \\
e-mail: thomee@math.chalmers.se \\

\noindent 
Department of Applied Physics and Applied Mathematics\\
Columbia University\\
500 W. 120th Street, New York, NY 10027, USA \\
e-mail: zhizhou0125@gmail.com  \\
\end{document}